
\documentclass[a4paper,11pt]{article}
\usepackage{amsthm,amsmath,amssymb,amsfonts,bbm,graphics,a4wide,rotating,graphicx}
\usepackage{color}
\usepackage{natbib}

\numberwithin{equation}{section}
\theoremstyle{plain}

\newtheorem{lemma}{Lemma}[section]

\newcommand{\norm}[1]{\ensuremath{\vert\!\vert #1 \vert\!\vert}}
\newcommand{\te}{\theta}
\newcommand{\ep}{\epsilon}

\newcommand{\esp}{\mathbb E\,}

\newcommand{\rn}{\sqrt{n}}

\newcommand{\R}{\mathbb{R}}
\newcommand{\N}{\mathbb{N}}
\newcommand{\Ne}{\mathbb{N}}
\newcommand{\E}{\mathbb{E}}
\newcommand{\Pb}{\mathbb P\, }
\newcommand{\Pp}{\mathbb P }
\renewcommand{\P}{\mathbb P }
\newcommand{\G}{G}
\renewcommand{\L}{\mathbb{L}}
\newcommand{\La}{\Lambda}
\newcommand{\la}{\lambda}

\newcommand{\thol}{\theta_{0\lambda}}
\newcommand{\thoL}{\theta_{0K_n}}
\newcommand{\thoLL}{\theta_{0\bar{K_n}}}
\newcommand{\foL}{f_{0K_n}}
\newcommand{\foLL}{f_{0\bar{K_n}}}
\newcommand{\thl}{\theta_{\lambda}}
\renewcommand{\th}{\theta}
\newcommand{\ga}{\gamma}
\newcommand{\be}{\beta}
\def\1{1\!{\rm l}}
\renewenvironment{proof}{\noindent{\bf Proof.}}{\hfill
  $\blacksquare$\par\noindent}
\newtheorem{thm}{Theorem}[section]
\newtheorem{corollary}{Corollary}[section]
\newtheorem{defin}{Definition}[section]
\newtheorem{example}{Example}[section]
\newtheorem{remark}{Remark}

\begin{document}
\author{Vincent RIVOIRARD\thanks{Laboratoire de Math\'ematique, CNRS 8628, Universit\'e Paris Sud, 91405 Orsay Cedex, France. D\'epartement de Math\'ematiques et Applications, ENS Paris, 45 rue d'Ulm, 75230 Paris Cedex 05, France. E-mail: Vincent.Rivoirard@math.u-psud.fr} \ and Judith ROUSSEAU \thanks{Ceremade,
Universit\'e Paris Dauphine,
Place du Mar\'echal de Lattre de Tassigny,
75016 Paris, France. CREST, 15, Boulevard Gabriel P\'eri
92245 Malakoff Cedex, France. E-mail: rousseau@ceremade.dauphine.fr}}
\title{Bernstein Von Mises Theorem for linear functionals of the density  }

\maketitle

\begin{abstract}
In this paper, we study the asymptotic posterior distribution of linear functionals of the density. In particular, we give general conditions to obtain a semiparametric version of the Bernstein-Von Mises theorem. We then apply this general result to nonparametric priors based on infinite dimensional exponential families. As a byproduct, we also derive adaptive nonparametric rates of concentration of the posterior distributions under these families of priors on the class of Sobolev and Besov spaces. 
\end{abstract}

\noindent {\bf Keywords} Adaptive estimation, Bayesian nonparametric, Bernstein Von Mises, Rates of convergence,  Wavelet
~\\
\noindent {\bf Mathematics Subject Classification (2000)} 62G20 62F15

\section{Introduction}

The Bernstein-Von Mises property, in Bayesian analysis, concerns the asymptotic form of the posterior distribution of a quantity of interest, and more specifically it corresponds to the asymptotic normality of the posterior distribution centered at some kind of maximum likelihood estimator with variance being equal to the asymptotic frequentist variance of the centering point. Such results are well know in parametric frameworks, see for instance \cite{Lc} where general conditions are given.  This is an important property for both practical and theoretical reasons. In particular the asymptotic normality of the posterior distributions allows us to construct approximate credible regions and the duality between the behaviour of the posterior distribution and the frequentist distribution of the asymptotic centering point of the posterior implies that credible regions will have also good frequentist properties. These results are given in many Bayesian textbooks see for instance \cite{R} or \cite{B}.

In a frequentist perspective the Bernstein-Von Mises property enables the construction of confidence regions since under this property a Bayesian credible region will be asymptotically a frequentist confidence region as well. This is even more important in complex models, since in such models the construction of confidence regions can be difficult whereas, the Markov Chain Monte Carlo algorithms usually make the construction of a Bayesian credible region feasible. However the more complex the model the harder it is to derive Bernstein - Von Mises theorems.  In infinite dimensional setups, the mechanisms are even more complex.

Semi-parametric and non parametric  models are widely popular both from a theoretical and practical perspective and have been used by frequentists as well as Bayesians although their theoretical asymptotic properties have been mainly studied in the frequentist literature. The use of Bayesian non parametric or semi-parametric approaches is more recent and has been made possible mainly by the development of algorithms such as Markov Chain Monte-Carlo algorithms but has grown rapidly over the past decade.

However,  there is still little work on asymptotic properties of Bayesian procedures in semi-parametric models or even in nonparametric models. Most of existing works on the asymptotic posterior distributions deal with consistency or rates of concentration of the posterior. In other words it consists in controlling objects in the form
$
\Pp^\pi \left[ U_n |X^n\right]
$
where $\Pp^\pi[.|X^n]$ denotes the posterior distribution given a $n$ vector of observations$ X^n$ and $U_n$ denotes either a fixed neighbourhood (consistency) or a sequence of shrinking neighbourhoods (rates of concentration). As remarked by \cite{DF} consistency is an important condition since it is not possible to construct subjective prior in a nonparametric framework. Obtaining concentration rates of the posterior helps in understanding the impact of the choice of a specific prior and allows for a comparison between priors to some extent. However, to obtain a Bernstein-Von Mises theorem it is necessary not only to bound $\Pp^\pi \left[ U_n |X^n\right]$ but to determine an equivalent of $\Pp^\pi \left[ U_n |X^n\right]$ for some specific types of sets $U_n$. This difficulty explains that there is up to now very little work on Bernstein Von Mises theorems in infinite dimensional models. The most well known results are negative results and are given in \cite{F}. Some positiv
 e
 e results are provided by \cite{G00} on the  asymptotic normality of the posterior distribution of the parameter in an exponential family with increasing number of parameters. In a discrete setting \cite{BouGa} derive Bernstein-Von Mises results, in particular satisfied by Dirichlet priors. Nice positive results are obtained in \cite{KeL} and \cite{K}, however they rely heavily on a conjugacy type of property of the family of priors they consider and on the fact that their  priors put mass one on discrete probabilities which makes the comparison with the empirical distribution more tractable. 

In a semi-parametric framework, where the parameter can be separated into a parametric part, which is the parameter of interest and a non parametric part, which is the nuisance parameter, \cite{Ca} obtains interesting conditions leading to a Bernstein - Von Mises theorem on the parametric part, clarifying an earlier work of \cite{S}.

In this paper we are interested in studying the existence of a Bernstein-Von Mises property in semi-parametric models where the parameter of interest is a functional of the nuisance parameter, which is the density of the observations. The estimation of functionals of infinite dimensional parameters such as the cumulative distribution function at a specific point, is a widely studied problem both in the frequentist literature and in the Bayesian literature. There is a vast literature on the rates of convergence and on the asymptotic distribution of frequentist estimates of functionals of unknown curves and of finite dimensional functionals of curves in particular, see for instance \cite{VdV} for an excellent presentation of a general theory on such problems.

One of the most common functional considered in the literature is the cumulative distribution function calculated at a given point, say $F(x)$. The empirical cumulative distribution function, $F_n(x)$ is a natural frequentist estimator and its asymptotic distribution is   Gaussian with mean $F(x)$ and variance $F(x)(1-F(x))/n$.

The Bayesian counterpart of this estimator is the one derived from a Dirichlet process prior and it is well known to be asymptotically equivalent to $F_n(x)$, see for instance
\cite{GR}.
 This result is obtained using the conjugate nature of the Dirichlet prior, leading to an explicit posterior distribution.
 Other frequentist estimators, based on frequentist estimates of the density have also been studied in the frequentist literature, in particular estimates based on kernel estimators. Hence a natural question arises. Can we generalize the Bernstein - Von Mises theorem of the Dirichlet estimator to other Bayesian estimators? What happens if the prior has support on distributions absolutely continuous with respect to Lebesgue Measure?

 In this paper we provide an answer to these questions by establishing conditions under which a Bernstein-Von Mises theorem can be obtained for linear functional of the density of $f$,  such as the cumulative distribution function $F(x)$, with centering its empirical counterpart, for instance $F_n(x)$ the empirical cumulative distribution function, when the prior puts positive mass on absolutely continuous densities with respect to Lebesgue measures. We also study cases where the asymptotic posterior distribution of the functional is not asymptotically Gaussian but is asymptotically a mixture of Gaussian distributions with different centering points.

\subsection{Notations  and aim }\label{subsec:notation}

In this paper, we assume that given a distribution $\Pp$ with a compactly supported density $f$ with respect to the Lebesgue measure, $X_1,...,X_n$ are independent and identically distributed by $\Pp$. We set $X^n = (X_1,...,X_n)$  and denote $F$ the cumulative distribution function associated with $f$. Without loss of generality we assume that for any $i$, $X_i \in [0,1]$ and we set $$\mathcal F = \left\{ f:[0,1] \rightarrow \R^+, \ \int_0^1f(x)dx =1\right\}.$$

We now define other notations that will be used throughout the paper. Denote $l_n(f)$ the log-likelihood associated with the density $f$ and if it is parametrized by a finite dimensional parameter $\theta$, $l_n(\theta) = l_n(f_\theta)$. For an integrable function $g$, we sometimes use the notation $F(g) = \int_0^1 f(u)g(u)du $. We denote by $<.,.>_f$ the inner product in
 $$\L_2(F)=\left\{g:\quad\int g^2(x)f(x)dx < + \infty \right\}$$ and by $|\!| . |\!|_f$ the corresponding norm.

We also consider the inner product in $\L_2[0,1]$ denoted $<.,.>_2$ and $|\!| . |\!|_2$ the corresponding norm. 
When there is no ambiguity we note $<.,.>_{f_0}$ by $<.,.>$ and $|\!|.|\!|_{f_0}$ by $|\!| . |\!|$.

Let $K(f,f')$ and $h(f,f') $ respectively the Kullback-Leibler divergence and 
  the Hellinger distance between two densities $f$ and $f'$, where we recall that
$$K(f,f') = F \left(\log(f/f')\right),\quad h(f,f') =\left[ \int \left(\sqrt{f(x)}-\sqrt{f'(x)}\right)^2dx\right]^{1/2},$$
and define $$V(f,f') = F\left((\log(f/f'))^2\right).$$ Finally, let $\Pp_0$ the true distribution of the observations $X_i$. $f_0$ is the associated density and $F_0$ the associated cumulative distribution function. We consider the usual notations on the empirical process, namely
 $$P_n(g)  = \frac{1}{n}\sum_{i=1}^ng(X_i),
\quad G_n(g)=\frac{1}{\rn}\sum_{i=1}^n[g(X_i)-F_0(g)],$$
and $F_n$  the empirical distribution function.

Consider a prior $\Pi$ on the set $\mathcal F$. The aim of this paper is to study the posterior distribution of $\Psi(f)$, where $\Psi$ is a continuous linear form on $\L_2[0,1]$ (a typical example is $\Psi(f) = F(x_0)=\Pp[X\leq x_0]$ for $x_0\in\R$) and to derive conditions under which
  \begin{eqnarray*}
  \Pp^\pi\left[ \rn (\Psi(f) - \Psi(P_n)) \leq z| X^n\right] \rightarrow  \Phi_{V_0}(z)  \quad \mbox{ in } \Pp_0 \mbox{ Probability},
  \end{eqnarray*}
 where $V_0$ is the variance of $\rn \Psi(P_n)$ under $\Pp_0$ and for any $V$, $\Phi_{V}(z)$ is the cumulative distribution function of a Gaussian random variable centered at 0 with variance $V$.

\subsection{Organization of the paper}

In Section \ref{sec:BVM} we present the general Bernstein Von Mises theorem, which is given in the formal way in the case where linear submodels are adapted to the prior. We then apply, in Section \ref{sec:BVMexp}, this general theorem to the  case where the prior is based on infinite dimensional exponential families. In this section, we first give general results giving the asymptotic posterior distribution of $\Psi(f)$ which can be either Gaussian or a mixture of Gaussian distributions. We also provide a theorem describing the posterior concentration rate under such priors (see Section \ref{subsec:rate}). Finally, in Section \ref{subsec:ex:cdf}, using an example, we explain how bad phenomenons can occur. The proofs are postponed in Section \ref{sec:proofs}.

\section{Bernstein Von Mises theorems} \label{sec:BVM}
\subsection{Some heuristics for proving Bernstein Von Mises theorems}
We first define some notions that are useful in the study of asymptotic properties of semi-parametric models. These notions can be found for instance in \cite{VdV}.

As in Chapter 25 of \cite{VdV},  to study the asymptotic behaviour of semi-parametric models we consider 1-dimensional differentiable paths locally around the true parameter $f_0$,  that is submodels of the form:
 $u \rightarrow f_u $ for $0<u<u_0$, for some $u_0>0$ such that for each path there exists a measurable function $g$ called the score function for the submodel $\{f_u, , 0<u< u_0\}$ at $u=0$ satisfying
\begin{eqnarray} \label{def:score}
\lim_{u\rightarrow 0}\int_\R \left(\frac{f_u^{1/2}(x)-f_0^{1/2}(x)}{u} -\frac{1}{2}g(x)f_0^{1/2}(x)\right)^2dx = 0.
\end{eqnarray}
We denote by $\mathcal F_{f_0}$ the  tangent set, i.e. the collection of score
functions   $g$   associated    with   these   differentiable   paths.  Using
(\ref{def:score}), $\mathcal F_{f_0}$ can be identified
with a subset of $\{g\in\L_2(F_0):\ F_0(g)=0\}$.
For instance,  when  considering  all  probability
laws, the most usual collection of differentiable paths is given by
\begin{equation}\label{score}
f_u(x) = c(t)f_0(x) e ^{ug(x) }
\end{equation}
with $\norm{g}_\infty<\infty$ and $c$ such that
$c(0)=1$ and  $c'(0)=0$.  In  this case,  $g$  is the
score function. Note that as explained in \cite{VdV}, the collection of differentiable paths of the form $f_u(x) = 2c(u)f_0(x)(1+\exp(-2ug(x)))^{-1}$ (with previous conditions on $c$),  leads to the tangent space given by $\{g\in\L_2(F_0):\ F_0(g)=0\}$.

Now, consider a continuous linear form  $\Psi$ on $\L_2$. We can identify such a functional by a function $ \psi\in  \L_2$ such that  for all $f\in \L_2$
\begin{equation}\label{Riesz}
\Psi(f) = \int f(x)\psi(x)dx.
\end{equation} Then for any differentiable path $t \rightarrow f_t$ with score
function $g$, if the function $\psi $ is bounded on $\R$ (or on the support
of $f_u$ for all $0\leq u<u_0$),
\begin{eqnarray*}
\frac{\Psi(f_u)  -\Psi(f_0)}{t} &=&  \int \psi (x)    g(x)f_0 (x)dx + \int \frac{\left(f_u^{1/2}(x)-f_0^{1/2}(x)\right)^{2}}{u}\psi(x) dx \\
 & & + 2\int \psi(x)
 \left(\frac{f_u^{1/2}(x)-f_0^{1/2}(x)}{u} -\frac{1}{2}g(x)f_0^{1/2}(x)\right)f_0^{1/2}(x)dx \\
  &=&<\psi,g> + o(1).
\end{eqnarray*}
Then, we  can define the efficient influence function $\tilde{\psi}$ belonging
to $\overline{\mbox{lin}}(\mathcal F_{f_0})$ (the closure of the linear space generated by $\mathcal F_{f_0}$) that satisfies for any $g \in \mathcal F_{f_0},$
$$\int  \tilde{\psi}(u)  g(x)f_0(x)dx =  \int  \psi(x)  g(x)f_0 (x)dx.$$  This
implies:
\begin{equation}\label{limPsi}
\lim_{u\to 0}\frac{\Psi(f_u) -\Psi(f_0)}{u}=<\tilde\psi,g>.
\end{equation}
The efficient influence function  will  play an  important  role for  our purpose.  The efficient
influence function is
also a key notion to characterize asymptotically efficient estimators (see Section
25.3 of \cite{VdV}).

Now, let us provide some examples by specifying different types of continuous linear forms that can be considered.
\begin{example}
An important example is provided by the cumulative distribution function. If $x_0\in\R$ is fixed, consider for any density function $f\in \L_2$ whose cdf is $F$,
$$\Psi(f)=\int \1_{x\leq x_0} f(x) dx=F(x_0)$$ so that in this case, $\psi(u) = \1_{x \leq x_0}$, which is a bounded function  and if $\mathcal F_{f_0}$ is the subspace of $\L_2(F_0)$ of functions $g$ satisfying $F_0(g)=0$ then $\tilde{\psi}(x) = \1_{x\leq x_0} - F_0(x_0)$.
\end{example}
\begin{example}
 More generally, for any measurable set $A$  consider $\psi(x) = \1_{x \in A}$ and for any density function $f\in \L_2$
$$\Psi(f)=\int\1_{x \in A}f(u)du$$satisfies the above conditions and $\tilde{\psi}(x) = \1_{x \in A}- \int_A f_0(x)dx$.
\end{example}
\begin{example}
If $f_0 $ has bounded support, say on $[0,1]$ then the functional $$\Psi(f)= \E_f[X] = \int_0^1 xf(x)dx$$ satisfies the above conditions, $\psi(x) =x$ and $\tilde{\psi}(x) = x- \E_{f_0}[X]$.
\end{example}

In this framework, the Bernstein Von Mises theorem could be derived from the convergence of the following Laplace transform defined for any $t\in\R$ by
\begin{eqnarray*}
L_n(t) &=& \esp^\pi[\exp(t\rn(\Psi(f)-\Psi(P_n)))| X^n ] \\
 &=& \frac{\int\exp\left(t\rn(\Psi(f)-\Psi(P_n))+l_n(f)-l_n(f_0)\right) d\pi(f) }{\int\exp\left(l_n(f)-l_n(f_0)\right) d\pi(f) }.
\end{eqnarray*}
Now, let us set $f_{g,n}=f_u$ if $u=n^{-\frac{1}{2}}$. We have:
\begin{eqnarray*}
\sqrt{n}\left(\Psi(f_{g,n})-\Psi(P_n)\right)&=&\sqrt{n}\int\psi(x)(f_{g,n}(x)-f_0(x))dx-G_n(\tilde\psi)\\
&=&<\tilde\psi,g>  - G_n(\tilde\psi) +\Delta_n(g).
\end{eqnarray*}
Furthermore,
\begin{eqnarray*}
l_n(f_{g,n}) - l_n(f_0)
&=& R_n(g)+G_n(g)-\frac{F_0(g^2)}{2},
\end{eqnarray*}
with
\[R_n(g) = nP_n\left(\log\left( \frac{f_{g,n}}{f_0} \right)\right) - G_n(g) + \frac{F_0(g^2)}{2}.\]
So,
\begin{eqnarray*}
&&t\sqrt{n}\left(\Psi(f_{g,n})-\Psi(P_n)\right)+l_n(f_{g,n}) - l_n(f_0)\\
&&\hspace{3cm}=R_n(g)-\frac{F_0(g^2)}{2}+G_n(g-t\tilde\psi)+t\Delta_n(g)+t<\tilde\psi,g>\\
&&\hspace{3cm}=R_n(g-t\tilde\psi)+G_n(g-t\tilde\psi)-\frac{F_0((g-t\tilde\psi)^2)}{2}+\frac{t^2F_0(\tilde\psi^2)}{2}+U_n,
\end{eqnarray*}
with
\[U_n=t\Delta_n(g) + R_n(g)-R_n(g-t\tilde\psi).\]
Lemma 25.14 of \cite{VdV} shows that under (\ref{def:score}), $R_n(g)=o(1)$ and (\ref{limPsi}) yields $\Delta_n(g)=o(1)$ for a fixed $g$.
It is not enough however to derive a Bernstein-Von Mises theorem. Nonetheless if we can
 choose a prior distribution $\pi$ adapted to the previous framework to obtain uniformly $U_n=o(1)$,
\[\sqrt{n}\left(\Psi(f_{g,n})-\Psi(f)\right)+l_n(f_{g,n}) - l_n(f)=o(1)\]
 and the equalities
\begin{eqnarray*}\label{cond:big}
\frac{\int e^{R_n(g-t\tilde\psi)+G_n(g-t\tilde\psi)-\frac{F_0((g-t\tilde\psi)^2)}{2}}d\pi(f)}{\int e^{R_n(g)+G_n(g)-\frac{F_0(g^2)}{2}}d\pi(f)}&=&\frac{\int \exp\left(l_n(f)-l_n(f_0)\right) d\pi(f_{g+t\tilde\psi}) }{\int\exp\left(l_n(f)-l_n(f_0)\right)  d\pi(f) }\\
&=&1+o(1),
 \end{eqnarray*}
then
\[L_n(t)=\exp\left(\frac{t^2F_0(\tilde\psi^2)}{2}\right)(1+o(1)).\]
In this case, our goal is reached. However, it is not obvious that a given prior $\pi$ satisfies all these properties. In particular, in a nonparametric framework, the property $R_n(g)= o(1)$ uniformly over a set whose posterior probability goes to 1, is usually not satisfied.
We thus consider an alternative approach based on linear submodels.

\subsection{Bernstein Von Mises under linear submodels} \label{subsec:BVM:line}
In this section we study the case where linear local models are
adapted to the prior. More precisely, we assume that $\norm{\log(f_0)}_\infty<\infty$ so, for each density function $f$, we
define $h$ such that for any $x$, $$h(x)=\sqrt{n}\log\left(\frac{f(x)}{f_0(x)}\right)\quad\mbox{or equivalently}\quad f(x) = f_0(x)\exp\left(\frac{h(x)}{\sqrt{n}}\right).$$ For the sake of clarity, we sometime write $f_h$ instead of $f$ and  $h_f$ instead of $h$ to underline the relationship between $f$ and $h$.
Note that in this context $h$ is not the score function since $F_0(h)\neq 0$.
It would be equivalent
 to consider local models of the form  $f = f_0(1+h/\rn)$, except that
we would have to impose constraints on $h$ for $f$ to be positive.
We consider a continuous linear form  $\Psi$ on $\L_2$ such that for any $f\in \L_2$, we consider $\psi$ such that (\ref{Riesz}) is satisfied and we set for any $x$,
\begin{equation}\label{psic}
\psi_c(x) =\psi(x)-F_0(\psi).
\end{equation}
Note that  $\psi_c$ coincides with the influence function $\tilde\psi$ associated with the tangent set $\{g \in \L_2(F_0); F_0(g) = 0\}$. Then we consider the following assumptions.
\begin{itemize}
\item[(A1)]  The posterior distribution concentrates around $f_0$. More precisely, there exists $u_n=o(1)$ such that if
$A_{u_n}^1 = \left\{f \in
\mathcal F:\quad V(f_0,f) \leq u_n^2\right\}$
the posterior distribution of $A_{u_n}^1$ satisfies
$$\P^\pi\left\{ A_{u_n}^1|X^n\right\} = 1 + o_{\P_0}(1).$$
\item[(A2)] The posterior distribution of the subset $A_{n} \subset A_{u_n}^1$  of densities such that
\begin{equation}\label{ineq:A2}
\int\left|\log\left(\frac{f(x)}{f_0(x)}\right)\right|^3\left(f_0(x)+f(x)\right)dx=o(1)
\end{equation}
satisfies
$$\P^\pi\left[ A_{n}|X^n\right] = 1 + o_{\P_0}(1).$$
\item[(A3)]
Let $$R_n(h)  =\sqrt{n}F_0(h) + \frac{F_0(h^2)}{2} $$ and for any $x$, $$\bar{\psi}_{t,n}(x) =\psi_c(x) + \frac{\rn}{t} \log\left(
F_0\left[\exp\left(\frac{h}{\rn} - \frac{t\psi_c}{\rn}\right)\right]\right).$$ We have
\begin{eqnarray} \label{cond:big:2}
 && \frac{\int_{A_{n}}\exp\left(-\frac{F_0((h_f-t\bar{\psi}_{t,n})^2)}{2}+ \G_n(h_f-t\bar{\psi}_{t,n}) +
R_n(h_f-t\bar{\psi}_{t,n})\right) d\pi(f) }{\int_{A_{n}}\exp\left(-
\frac{F_0(h_f^2)}{2}+ G_n(h_f) +  R_n(h_f)\right)d\pi(f) } \nonumber \\
 &  &\quad \quad \quad =  1+ o_{\Pp_0}(1).
 \end{eqnarray}
\end{itemize}
Before stating our main result, let us discuss these assumptions.
Condition (A1) concerns concentration rates of the posterior distribution and
there exists now a large literature on such results. See
for instance \cite{SW} or \cite{GGVdV} for general results. The difficulty here comes
from the use of $V$ instead of the Hellinger or
the $\L_1$-distance. However since $u_n$ does not need  to be
optimal, deriving rates in terms of  $V$ from those in terms  of the
Hellinger distance is often not a problem (see below).

Condition (A2) is a refinement of (A1) but can often be derived from (A1) as illustrated below.

The main difficulty comes from condition (A3). To prove
it, we need to be able to construct a transformation
$T$ such that $Tf_h = f_{h-t\bar{\psi}_{t,n}}$ exists and such that the
prior is hardly modified by this transformation. In parametric
setups, continuity of the prior near the true value is enough to
ensure that the prior would hardly be modified by such a transform and
this remains true in semi-parametric setups where we can write
the parameter as $(\theta, \eta)$ where $\theta $ is the parameter
of interest and is finite dimensional. Indeed as shown in \cite{Ca}
under certain conditions the transformations can be transferred to
transformations on $\theta$ which is finite dimensional. Here this
 is more complex since $T$ is a transformation on $f$ which is
infinite dimensional so that a condition of the form $d\pi(Tf) =
d\pi(f)(1+ o(1))$ does not necessarily make sense.  We study this
aspect in more details in Section \ref{sec:BVMexp}.

Now, we can state the main result of this section.
\begin{thm} \label{th:sieve:1}
Let $f_0$ be a density on $\mathcal F$ such that $\norm{\log(f_0)}_\infty<\infty$  and $\norm{\psi}_\infty<\infty$. Assume that
(A1), (A2) and (A3) are true. Then, if
$$\Psi(P_n) = P_n(\psi) = \frac{\sum_{i=1}^n\psi(X_i)}{n}$$ we have for any $z$, in probability with respect to $ \Pp_0$,
$$\Pp^\pi\left\{\sqrt{n}(\Psi(f)-\Psi(P_n))\leq z|X^n\right\}
- \Phi_{F_0(\psi_c^2)}(z) \rightarrow 0.$$
\end{thm}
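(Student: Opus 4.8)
The plan is to compute the Laplace transform $L_n(t) = \esp^\pi[\exp(t\rn(\Psi(f)-\Psi(P_n)))\,|\,X^n]$ and show that on a set of probability tending to one it equals $\exp(t^2 F_0(\psi_c^2)/2 - t G_n(\psi_c))(1+o_{\Pp_0}(1))$, uniformly for $t$ in a neighbourhood of $0$; since $\rn(\Psi(P_n)-\Psi(f_0)) = G_n(\psi)= G_n(\psi_c)$, this identifies the posterior law of $\rn(\Psi(f)-\Psi(P_n))$ as asymptotically $\mathcal N(0, F_0(\psi_c^2))$ by the usual Laplace/cumulant argument (L\'evy continuity plus uniform integrability from analyticity of $L_n$ near $0$, as in the parametric Bernstein--Von Mises proofs). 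Throughout I would use (A1)--(A2) to restrict both integrals in $L_n(t)$ to the set $A_n$ at the cost of a factor $1+o_{\Pp_0}(1)$, which is legitimate because the normalizing denominator is $\P^\pi[A_n\,|\,X^n]^{-1}(1+o(1))$ and the numerator's contribution from $A_n^c$ is controlled by the same posterior mass bound.

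Next I would carry out the algebraic reduction on $A_n$. Writing $f = f_h$ with $h = h_f = \rn\log(f/f_0)$, one has the exact identities
\[
l_n(f_h) - l_n(f_0) = -\frac{F_0(h^2)}{2} + G_n(h) + R_n(h), \qquad R_n(h) = \rn F_0(h) + \frac{F_0(h^2)}{2},
\]
which is just a rearrangement of $n P_n(\log(f_h/f_0)) = \rn P_n(h) = G_n(h) + \rn F_0(h)$ — so here $R_n$ carries the $\rn F_0(h)$ term that is nonzero because $h$ is not centered. For the functional part, since $\Psi$ is linear and $\psi$ bounded,
\[
\rn(\Psi(f_h) - \Psi(P_n)) = \rn \int \psi (f_h - f_0) - G_n(\psi) = \rn F_0\!\left(\psi\bigl(e^{h/\rn}-1\bigr)\right) - G_n(\psi_c),
\]
using $F_0(\psi_c) = 0$. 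The point of the definition of $\bar\psi_{t,n}$ is exactly that it makes the substitution $h \mapsto h - t\bar\psi_{t,n}$ turn the sum $t\rn(\Psi(f_h)-\Psi(P_n)) + l_n(f_h) - l_n(f_0)$ into the ``shifted'' log-likelihood appearing in the numerator of (A3) plus the deterministic term $t^2 F_0(\tilde\psi^2)/2$ — here I would verify that $\rn F_0(\psi(e^{h/\rn}-1)) + \rn F_0(e^{h/\rn}\cdot(\text{correction from } \bar\psi_{t,n}))$ telescopes via the defining relation $\bar\psi_{t,n} = \psi_c + (\rn/t)\log F_0[\exp(h/\rn - t\psi_c/\rn)]$, which is precisely engineered so that $F_0(e^{(h-t\bar\psi_{t,n})/\rn}) = F_0(e^{h/\rn})$, i.e. the transformation $f_h \mapsto f_{h - t\bar\psi_{t,n}}$ maps densities to densities. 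After this reduction $L_n(t)$ equals $\exp(t^2 F_0(\psi_c^2)/2 - t G_n(\psi_c))$ times the ratio in (A3) (plus lower-order remainders), which is $1 + o_{\Pp_0}(1)$ by assumption. The remaining work is to show the error terms are genuinely $o_{\Pp_0}(1)$: on $A_n$, a third-order Taylor expansion of $e^{h/\rn}$ together with the moment bound (\ref{ineq:A2}) controls $\rn F_0(\psi(e^{h/\rn}-1)) - F_0(\psi h)$ and shows $\bar\psi_{t,n} = \psi_c + O(\|h\|^2/\rn)$-type estimates hold, so that $\bar\psi_{t,n}$ is close to $\psi_c$ in $\L_2(F_0)$; this also controls the discrepancy between $F_0((h - t\bar\psi_{t,n})^2)/2$ and $F_0((h-t\psi_c)^2)/2$ and between $G_n(h - t\bar\psi_{t,n})$ and $G_n(h - t\psi_c)$ uniformly over $A_n$ (the latter needing a bound on $G_n(\bar\psi_{t,n} - \psi_c)$, which follows since $\bar\psi_{t,n}-\psi_c$ is small in sup-norm on $A_n$ using $\|\log f_0\|_\infty<\infty$ and the moment condition, so $G_n$ of it is $o_{\Pp_0}(1)$).

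The main obstacle I anticipate is not (A3) itself — which is assumed — but making the passage from (A3) to the statement fully uniform: one needs the ratio in (A3) and all the Taylor remainders to be $o_{\Pp_0}(1)$ \emph{simultaneously} for $t$ ranging over a fixed interval $[-t_0,t_0]$, and one needs enough control to invoke uniform integrability so that pointwise convergence of Laplace transforms upgrades to convergence of distribution functions at every $z$. I would handle the uniformity in $t$ by noting all the quantities are continuous (indeed analytic) in $t$ and the bounds are monotone/polynomial in $|t|$, and handle the distributional conclusion by the standard argument that $L_n$ is finite and analytic in a complex neighbourhood of $0$ (because $\psi$ is bounded and $\Psi(f) - \Psi(P_n)$ is bounded on $A_n$), giving control of all posterior moments of $\rn(\Psi(f)-\Psi(P_n))$ and hence tightness; then Curtiss' theorem on convergence of moment generating functions, combined with continuity of the limiting Gaussian cdf, yields convergence of $\Pp^\pi\{\rn(\Psi(f)-\Psi(P_n))\le z\,|\,X^n\}$ to $\Phi_{F_0(\psi_c^2)}(z)$ for every $z$, in $\Pp_0$-probability.
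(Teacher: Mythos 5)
Your overall strategy matches the paper's exactly: condition on $A_n$ via (A1)--(A2), compute the Laplace transform $L_n(t)$ of $\rn(\Psi(f)-\Psi(P_n))$ given $A_n$ and $X^n$, rewrite the numerator exponent via the substitution $h\mapsto h-t\bar\psi_{t,n}$ so that (A3) becomes applicable, and pass from Laplace transform convergence to convergence of distribution functions. You also correctly identify the role of $\bar\psi_{t,n}$ (engineered so that $f_{h-t\bar\psi_{t,n}}$ integrates to one) and of (A2) (third-moment control of Taylor remainders of $e^{h/\rn}$ on $A_n$).

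There is, however, a concrete bookkeeping error that makes your intermediate claim internally inconsistent. You state that $L_n(t) = \exp\bigl(t^2F_0(\psi_c^2)/2 - tG_n(\psi_c)\bigr)$ times the ratio in (A3). But the $-tG_n(\psi_c)$ you correctly extract from the identity $\rn(\Psi(f_h)-\Psi(P_n)) = \rn F_0\bigl(\psi(e^{h/\rn}-1)\bigr) - G_n(\psi_c)$ is precisely the term already absorbed into the (A3) numerator: since $\bar\psi_{t,n}-\psi_c$ is constant in $x$, one has $G_n(\bar\psi_{t,n}) = G_n(\psi_c)$, hence $G_n(h-t\bar\psi_{t,n}) = G_n(h) - tG_n(\psi_c)$, and this is where the $-tG_n(\psi_c)$ lives inside the ratio of (A3). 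Pulling it out again as a multiplicative prefactor double-counts it. As written, your claimed $L_n(t)$ would identify the posterior of $\rn(\Psi(f)-\Psi(P_n))$ as $\mathcal N\bigl(-G_n(\psi_c),\,F_0(\psi_c^2)\bigr)$, which contradicts your own conclusion and the theorem's statement. The correct accounting (the paper's $U_{n,h}$ computation) collects all the deterministic discrepancy terms into $U_{n,h} = (t^2/2)F_0(\psi_c^2) + o(1)$ uniformly on $A_n$, and the resulting limit is $L_n(t) = \exp\bigl((t^2/2)F_0(\psi_c^2)\bigr)(1 + o_{\Pp_0}(1))$ with no surviving $G_n(\psi_c)$ term. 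Your remaining observations --- controlling the remainders on $A_n$ via the moment condition in (A2), the uniformity-in-$t$ issue, and invoking boundedness of $\psi$ plus Curtiss/L\'evy to upgrade Laplace transform convergence to convergence of the distribution function at every $z$ --- are sound and consistent with the paper's argument.
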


The proof of Theorem \ref{th:sieve:1} is given in Section \ref{sec:th:sieve:1}.

Sieve priors lead to interesting behaviours of the posterior
distribution as illustrated in the following section. Indeed they
have a behaviour which is half way between parametric and non
parametric. We illustrate these
features in the following two sections.
\section{ Bernstein Von Mises theorem under infinite dimensional exponential families} \label{sec:BVMexp}
In this section, we study a specific class of priors based on infinite dimensional exponential families on the following class of densities supported by $[0,1]$:
$$\mathcal F =\left\{f \geq 0: \quad  f \mbox{ is 1-periodic}, \ \int_0^1 f(x)dx =1, \ \log(f) \in \L_2([0,1])\right\}.$$
We  assume that  $f_0\in{\mathcal F}$ and we consider two types of orthonormal bases defined in the following section, namely the Fourier and wavelet bases.
\subsection{Orthonormal bases}\label{orthobasis}

Fourier bases constitute unconditional bases of periodized Sobolev spaces $W^\ga$ where $\gamma$ is the smoothness parameter. Our results are also valid for a wide range of Besov spaces. In this case, we consider wavelet bases which allow for the following expansions:
$$f(x)=\th_{-10}\1_{[0,1]}(x)+\sum_{j=0}^{+\infty}\sum_{k=0}^{2^j-1}\th_{jk}\varPsi_{jk}(x),\quad x\in [0,1]$$
where $\th_{-10}=\int_0^1 f(x)dx$ and $\th_{jk}=\int_0^1 f(x)\varPsi_{jk}(x)dx.$ We recall that the functions $\varPsi_{jk}$ are obtained by periodizing dilations and translations of a mother wavelet $\varPsi$ that can be assumed to be supported by the compact set $[-A ,A]$: $$\varPsi_{jk}(x)=2^{\frac{j}{2}}\sum_{l=-\infty}^{+\infty}\varPsi(2^jx-k+2^jl),\quad x\in [0,1].$$
If $\varPsi$ belongs to the H\"older space $C^r$ and has $r$ vanishing moments then the wavelet basis constitutes an unconditional basis of the Besov space ${\mathcal B}^{\ga}_{p,q}$ for $1\leq p,q\leq +\infty$ and $\max\left(0,\frac{1}{p}-\frac{1}{2}\right)<\ga<r$. In this case, ${\mathcal B}^{\ga}_{p,q}$ is the set of functions $f$ of $\L_2[0,1]$ such that $\norm{f}_{\ga,p,q}<\infty$ where
$$\norm{f}_{\ga,p,q}=\left\{\begin{array}{ll}|\th_{-10}|+\left(\sum_{j=0}^{+\infty}2^{jq(\ga+\frac{1}{2}-\frac{1}{p})}\left(\sum_{k=0}^{2^j-1}|\th_{jk}|^p\right)^{\frac{q}{p}}\right)^{\frac{1}{q}}&\mbox{ if }q<\infty\\
|\th_{-10}|+\sup_{j\geq 0}\left\{2^{j(\ga+\frac{1}{2}-\frac{1}{p})}\left(\sum_{k=0}^{2^j-1}|\th_{jk}|^p\right)^{\frac{1}{p}}\right\}&\mbox{ if }q=\infty.
\end{array}\right.$$
We refer the reader to \cite{Mey} for a good review of wavelets and Besov spaces. We just mention that Besov spaces include in particular Sobolev spaces ($W^\ga={\mathcal B}^{\ga}_{2,2}$) and, when $\ga$ is not an integer, H\"older spaces
($C^\ga={\mathcal B}^{\ga}_{\infty,\infty}$).
To shorten notations, the orthonormal basis will be denoted $(\phi_{\la})_{\la\in \Ne}$, where $\phi_0=\1_{[0,1]}$ and
\begin{itemize}
\item[-] for the Fourier basis, for $\la\geq 1$, $$\phi_{2\la-1}(x)=\sqrt{2}\sin(2\pi\la x),\quad  \phi_{2\la}(x)=\sqrt{2}\cos(2\pi\la x).$$
 \item[-] for the wavelet basis, if $\la=2^j+k$, with $j\in\Ne$ and $k\in\{0,\dots,2^j-1\}$,
$$\phi_{\la}=\varPsi_{jk}.$$
\end{itemize}
Now, the decomposition of each periodized function $f\in \L_2[0,1]$ on  $(\phi_{\la})_{\la\in \Ne}$ is written as follows: $$f(x)=\sum_{\la\in\Ne}\th_\la\phi_{\la}(x),\quad x\in [0,1],$$
where $\th_\la=\int_0^1f(x)\phi_\la(x)dx$. Recall that when the Fourier basis is used, $f$ lies in $W^\ga$ for $\ga>0$ if and only if $\norm{f}_{\ga}<\infty$, where
$$\norm{f}_{\ga}=\left(\th_0^{2}+\sum_{\la\in\Ne^*}|\la|^{2\ga}\th_\la^{2}\right)^{\frac{1}{2}}.$$
We respectively use  $\norm{.}_{\ga}$ and $\norm{.}_{\ga,p,q}$ to define the radius of the balls of $W^\ga$ and ${\mathcal B}^{\ga}_{p,q}$ respectively.
We now present the general result on posterior concentration rates associated with such prior models.
\subsection{Posterior rates} \label{subsec:rate}
Assume that $f_0 \in \mathcal F$ and  let $\varPhi$ be one of the orthonormal basis introduced in Section \ref{orthobasis}, then
$$\log(f_0)-\int_0^1\log(f_0(x))dx=\sum_{\la\in\Ne^*}\thol\phi_{\la}.$$
Set $\theta_0 = ( \theta_{0\lambda})_{\lambda \in \N^*}$ and define
$c(\theta_0)=-\int_0^1\log (f_0(x) ) dx,$ we have
$$ f_0(x)=\exp\left(\sum_{\la\in\Ne^*}\thol\phi_{\la}(x)-c(\theta_0)\right).
$$
We consider the following family of models: for any $k\in\Ne^*$, we set
 \begin{eqnarray*}
 \mathcal F_k =\left\{ f_\th=\exp\left(\sum_{\la=1}^k\thl\phi_{\la}-c(\th)\right):\quad \theta \in \R^k  \right\},
 \end{eqnarray*}
where
\begin{eqnarray}\label{def:c}
c(\th)=\log\left(\int_0^1\exp\left(\sum_{\la=1}^k\thl\phi_{\la}(x)\right)dx\right).\end{eqnarray}
So, we define a prior $\pi$ on the set $\mathcal F = \cup_k \mathcal F_k$ by defining a prior $p$ on $\Ne^*$ and then, once $k$ is chosen, we fix a prior $\pi_k$ on $\mathcal F_k$.
Such priors are often considered in the Bayesian non parametric literature. See for instance \cite{Scri}. The special case of log-spline priors has been studied by \cite{GGVdV} and \cite{Huang}, whereas the prior considered by \cite{VW} is based on Legendre polynomials. For the wavelet case, \cite{Huang} considered the special case of the Haar basis.

Since one of the key conditions needed to obtain a Bernstein Von Mises theorem is a concentration rate of the posterior distribution of order $\epsilon_n$, we first give two general results on concentration rates of posterior distributions based on the two different setups of orthonormal bases: the  Fourier basis and the  wavelet basis. These results have their own interest since we obtain  in such contexts optimal adaptive rates of convergence. In a similar spirit \cite{Scri} considers infinite dimensional exponential families and derives minimax and adaptive posterior concentration rates. Her work differs from the following theorem in two main aspects. Firstly she restricts her attention to the case of Sobolev spaces and Fourier basis, whereas we consider Besov spaces and secondly she obtains adaptivity by putting a prior on the smoothness of the Sobolev class whereas we obtain adaptivity by constructing a prior on the size $k$ of the parametric spaces, which to our opinion
  is a more natural approach. Moreover \cite{Scri} merely considers Gaussian priors. Also related to this problem is the work of \cite{Huang} who derives a general framework to obtain adaptive posterior concentration rates and apply her results to the Haar basis case. The limitation in her case, apart from the fact that she considers the Haar basis and no other wavelet basis is that she constraints the $\theta_j$'s in each $k$ dimensional model to belong to a ball with fixed radius.

Now, we specify the conditions on the  prior $\pi$:
\begin{defin}\label{defin:prior}
Let $1>\be> 1/2$ be fixed and let
$g$ be  a  continuous and positive density on $\R$  bounded (up to a contant) by the function $M_{p_*}(x)=\exp\left(-c|x|^{p_*}\right)$ for  positive constants $c,p_*$ and assume that
for all $M>0$ there exists $a,b$ such that
$$g(y+u) \geq a\exp \{ -b (|y|^{p_*} + |u|^{p_*})\}, \quad \forall |y|\leq M, \quad \forall u \in \R$$
The prior $p$ on $k$ satisfies one of the following conditions:\\\\
{\bf [Case (PH)]} \ There exist two positive constants $c_1$ and $c_2$ such that for any $k\in\Ne^*$,
\begin{equation}\label{assprior}
\exp\left(-c_1 k L(k)\right)\leq p(k) \leq \exp\left(-c_2 k L(k)\right),
\end{equation}
where $L$ is the function  that can be either $L(x)=1$ or $L(x)=\log(x)$.\\\\
{\bf [Case (D)]} \  If $k_n^*=n^{1/(2\beta+1)}$,
$$p(k) = \delta_{k_n^*}(k).$$
Conditionally on $k$ we define the prior on $\mathcal F_k$ by assuming that the prior distribution $\pi_k$ on $\theta = (\theta_\lambda)_{1 \leq \lambda \leq k}$ is given by
$$\frac{\thl}{\sqrt{\tau_\la}} \sim g, \quad  \tau_\la = \tau_0 \la^{-2\beta} \quad \mbox{i.i.d.}$$
where $\beta < 1/2 +p_*/2$ if $p_*\leq 2$ and $\beta< 1/2 + 1/p_*$ if $p_*>2$.
\end{defin}
Observe that we do not necessarily consider Gaussian priors since we allow for densities $g$ to have different tails. The prior on $k$ can be non random, which corresponds to the Dirac case (D).
For the case (PH), $L(x) = \log(x)$ corresponds typically to a Poisson prior on $k$ and the case $L(x)=1$ corresponds typically to hypergeometric priors. Now, we have the the following result.
\begin{thm}\label{th:post:rate}
Assume that $\norm{\log(f_0)}_{\infty}<\infty$ and that there exists $\gamma >1/2$ such that  $\log(f_0)  \in {\mathcal B}^\ga_{p,q}$, with $p\geq 2$ and $1\leq q\leq \infty$. Then,
\begin{eqnarray} \label{post:rate:0}
\Pp^\pi\left\{ f_\theta: \quad h(f_0,f_\theta) \leq \frac{\log n}{L(n)}\ep_n | X^n\right\}=1+o_P(1),
\end{eqnarray}
and
\begin{eqnarray} \label{post:rate:1}
\Pp^\pi\left\{ f_\theta: \quad|\!|\theta_0 - \theta|\!|_2 \leq \frac{\left(\log n\right)^2}{L(n)} \ep_n | X^n\right]=1+o_P(1),
\end{eqnarray}
where in case (PH),
$$\ep_n=\ep_0\left(\frac{\log n}{n}\right)^{\frac{\gamma}{2\gamma+1}},$$
in case (D),
$$\ep_n=\ep_0\log n n^{-\frac{\be}{2\be+1}}, \quad \mbox{if } \gamma \geq \beta$$
$$\ep_n=\ep_0n^{-\frac{\gamma}{2\be+1}}, \quad \mbox{if } \gamma < \beta$$
and  $\ep_0$ is a constant large enough.
\end{thm}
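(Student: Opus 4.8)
The plan is to apply the now-standard machinery for posterior concentration rates (as in \cite{GGVdV}), which requires three ingredients: (i) a prior mass condition, namely that $\pi$ charges a Kullback--Leibler neighbourhood of $f_0$ of radius $\ep_n$ with mass at least $e^{-c n \ep_n^2}$; (ii) an entropy/sieve condition, namely the existence of sets $\mathcal F_n \subset \mathcal F$ whose metric entropy for the Hellinger distance is bounded by $n\ep_n^2$; and (iii) a remaining-mass condition $\pi(\mathcal F_n^c) \leq e^{-(c+4)n\ep_n^2}$. The conclusion \eqref{post:rate:0} then follows for the Hellinger distance, and \eqref{post:rate:1} is deduced from it by translating the Hellinger rate into an $\L_2$-rate on the coefficients $\theta$, using $\norm{\log(f_0)}_\infty < \infty$ together with the equivalence between $h(f_0,f_\theta)$ and $\norm{\theta_0 - \theta}_2$ on neighbourhoods of $f_0$ (a Taylor expansion of $c(\theta)$ and of the exponential, controlling the bias term $\sum_{\lambda > k}\theta_{0\lambda}^2$ by the Besov assumption $\log f_0 \in {\mathcal B}^\gamma_{p,q}$, $p\geq 2$), at the cost of the extra logarithmic factors.

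The first concrete step is the bias-variance split across the models $\mathcal F_k$. For a given $k$, approximating $\log f_0$ by its truncation to the first $k$ basis functions incurs an $\L_2$-error of order $k^{-\gamma}$ (here the hypothesis $p\geq 2$ is used, since it ensures the Besov norm controls the $\ell_2$-tail of the coefficients, via $\norm{\cdot}_{\ell_2}\leq \norm{\cdot}_{\ell_p}$ at each resolution level); combined with the $\norm{\log f_0}_\infty$ bound this gives $h(f_0, f_{\theta_{0K}}) \lesssim k^{-\gamma}$. For the prior mass: conditionally on $k$, the coordinates $\theta_\lambda/\sqrt{\tau_\lambda}$ are i.i.d.\ with density $g$, and the lower-bound hypothesis $g(y+u)\geq a\exp\{-b(|y|^{p_*}+|u|^{p_*})\}$ is exactly what is needed to lower bound $\pi_k$ of an $\ell_2$-ball of radius $\ep_n$ around $\theta_{0K}$; multiplying by $p(k) \geq \exp(-c_1 k L(k))$ (case (PH)) or using the Dirac mass at $k_n^*$ (case (D)) and optimizing the choice of $k \asymp (n/\log n)^{1/(2\gamma+1)}$ gives the prior mass bound with the stated $\ep_n$. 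The remaining-mass and entropy conditions are handled by taking $\mathcal F_n$ to be the union over $k \leq k_n$ (for a suitable $k_n$) of balls $\{\norm{\theta}_\infty \leq M_n\}$ in $\mathcal F_k$ with $M_n$ a slowly growing sequence: the entropy of each such ball is of order $k \log M_n$, the tail $p(k > k_n)$ is geometrically small by \eqref{assprior}, and the Gaussian-type (or $p_*$-exponential) tails of $g$ control $\pi_k(\norm{\theta}_\infty > M_n)$; the two logarithmic inflations in the statement come from choosing $M_n$ and from the passage $h \to \norm{\cdot}_2$.

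The main obstacle, and the place where most of the work lies, is the interplay between the two cases (PH) and (D) and the two regimes $\gamma \geq \beta$ versus $\gamma < \beta$ in case (D): in case (D) the dimension $k_n^* = n^{1/(2\beta+1)}$ is fixed by the prior rather than chosen adaptively, so when $\gamma < \beta$ the model is too large and the bias term $k_n^{*-\gamma}$ dominates, producing the slower rate $n^{-\gamma/(2\beta+1)}$, while when $\gamma\geq\beta$ the variance term $\sqrt{k_n^*/n}$ dominates and one recovers $n^{-\beta/(2\beta+1)}$ up to a log; keeping track of which term dominates, and verifying that the prior mass condition still holds with the \emph{non-adaptive} $k_n^*$ (which forces a careful estimate of how well $\theta_{0K}$ with $K=k_n^*$ can be matched by a draw from $\pi_{k_n^*}$ when the decay rate $\tau_\lambda = \tau_0\lambda^{-2\beta}$ of the prior variances is mismatched with the true smoothness $\gamma$), is the delicate bookkeeping. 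A secondary difficulty is making the $h \leftrightarrow \norm{\cdot}_2$ comparison uniform enough to transfer the rate to \eqref{post:rate:1} without losing more than the advertised $(\log n)$ powers, which requires the local equivalence of the two distances to hold on a neighbourhood whose posterior mass is already known to tend to one from \eqref{post:rate:0}.
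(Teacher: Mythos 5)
Your proposal follows essentially the same route as the paper: both invoke the Ghosal--Ghosh--van~der~Vaart machinery with three conditions (prior mass, sieve entropy, remaining mass), split the problem across the finite-dimensional models $\mathcal F_k$, control the bias $k^{-\gamma}$ by the Besov assumption via $p\geq 2$, lower-bound the prior mass using the assumed lower bound $g(y+u)\geq a\exp\{-b(|y|^{p_*}+|u|^{p_*})\}$, and exploit the $p_*$-exponential tail of $g$ for the remaining mass. The one structural difference is the sieve: you take $\ell_\infty$-balls of polynomially growing radius $M_n$, which buys a one-shot entropy bound of order $k_n\log n$; the paper instead takes $\|\theta\|_{\ell_2}\leq w_n=\exp(w_0 n^\rho(\log n)^q)$, which makes the remaining-mass condition trivial but forces a two-regime entropy argument (a local comparison via Lemma~\ref{h2} for shells $S_{n,j}$ with $(j+1)^2\ep_n^2 l_n$ small, and a crude log-cardinality bound otherwise, using $l_n^2\ll n$). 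Both are workable, and yours is arguably cleaner, provided you confirm $M_n\gtrsim(n\ep_n^2)^{1/p_*}$ suffices for the tail bound coordinate-wise given $\tau_\la=\tau_0\la^{-2\beta}$.

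There is, however, a genuine gap in how you justify the passage from the Hellinger rate~(\ref{post:rate:0}) to the $\ell_2$ rate on coefficients~(\ref{post:rate:1}), and in how the $(\log n)$ inflation arises. You attribute this to ``a Taylor expansion of $c(\theta)$ and of the exponential.'' A Taylor expansion only gives the equivalence $h^2(f_0,f_\theta)\asymp\|\theta_0-\theta\|_{\ell_2}^2$ once $\|\log(f_\theta/f_0)\|_\infty$ is already known to be small, which is precisely what is \emph{not} available a priori on the event $\{h(f_0,f_\theta)\leq J_{0,n}\ep_n\}\cap\mathcal F_n^*$ (the sieve permits $\|\theta\|_\infty$ up to $M_n$, hence $\|\log(f_\theta/f_0)\|_\infty$ up to order $\sqrt{l_n}M_n$, which blows up). The paper instead uses the Wong--Shen inequality $V(f_0,f_\theta)\leq 5\,h^2(f_0,f_\theta)\bigl(|\log M_1|-\log h(f_0,f_\theta)\bigr)^2$, $M_1=(\int f_0^2/f_\theta)^{1/2}$, together with $V(f_0,f_\theta)\geq c_0\|\theta_0-\theta\|_{\ell_2}^2$; the $(\log n)$ comes from $-\log h\lesssim\log n$ on the relevant shells, and the dangerous term $|\log M_1|\lesssim\sqrt{l_n}\|\theta_0-\theta\|_{\ell_2}$ is absorbed on the left side only because $l_n h^2\lesssim l_n(j+1)^2\ep_n^2\to 0$, which is exactly where $\gamma>1/2$ enters. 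Without this mechanism both Condition~(B) on the small-$j$ shells and the deduction of~(\ref{post:rate:1}) are incomplete. A secondary imprecision: the prior-mass ball must have $\ell_2$-radius $u_n\asymp\ep_n/\sqrt{k_n}$, not $\ep_n$, because $V(f_0,f_\theta)\lesssim u_n^2+k_n^{-2\gamma}+k_n u_n^2$; the end count happens to come out the same (both give $e^{-ck_n\log n}\geq e^{-c'n\ep_n^2}$), but the KL-ball inclusion fails with radius $\ep_n$.
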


The proof of Theorem \ref{th:post:rate} is given in Section \ref{sec:th:post:rate}.
\begin{remark}\label{tailp}
If the density $g$ only  satisfies a tail condition of the form
 $$g(x) \leq C_g |x|^{-p_*}, \quad |x| \quad \mbox{large enough}  $$
with $p_*>1$, then, in case (PH), if $\gamma >1$   the rates defined by (\ref{post:rate:0}) and (\ref{post:rate:1}) remain valid.
\end{remark}

\begin{remark}\label{rem:rate}
 Note that in the case (PH) the posterior concentration is, up to a $\log n$ term, the minimax rate of convergence on the  collection of spaces with smoothness $\gamma>1/2$, whereas in the case (D) the minimax rate is achieved only when $\gamma =\beta$. \end{remark}
\subsection{Bernstein Von Mises under these models} \label{subsec:BVM:BON}
In this section, we  apply Theorem \ref{th:sieve:1} of Section \ref{subsec:BVM:line} to establish the following Bernstein Von Mises-type result. For this purpose, let us expand the function $\psi_c$ defined in (\ref{psic}) on the basis $(\phi_{\la})_{\la\in \Ne}$:
$$\psi_c=\sum_{\la\in\Ne}\psi_{c,\la}\phi_\la.$$ We denote $\Pi_{f_0,k}$ the projection operator on the vector space generated by  $(\phi_{\la})_{0\leq\la\leq k}$ for the scalar product $<f,g>=F_0(fg)$ and $\Delta_\psi = \psi_c - \Pi_{f_0,k}\psi_c$. So we can write for any $x\in [0,1]$,
\[\Pi_{f_0,k}\psi_c(x)=\psi_{\Pi,c,0}+\sum_{\la=1}^k\psi_{\Pi,c,\la}\phi_\la(x),\]
since $\phi_0(x)=1.$ We denote $B_{n,k}$ the renormalized sequence of coefficients that appear in the above sum: $$B_{n,k}=\frac{\psi_{\Pi,c,[k]} }{\rn},\quad\psi_{\Pi,c,[k]}=(\psi_{\Pi,c,\la})_{1\leq\la\leq k}.$$ Such quantities will play a key role in the sequel. Let $l_0>0$ be large enough  so that
$$\Pp^\pi\left[ k>\frac{l_0 n\epsilon_n^2}{L(n)} |X^n\right] \leq e^{-cn\epsilon_n^2},$$
for some positive $c>0$, where $\epsilon_n$ is the posterior concentration rate defined in Theorem \ref{th:post:rate} and define $l_n = l_0 n\epsilon_n^2/L(n)$. In the case (D) we set $l_n = k_n^*$. In the following, in the case (D), whenever a statement concerns $k \leq l_n$ it is to be understood as $k=l_n$.

We have the following result.
\begin{thm} \label{th:expo:BVM}
Let us assume that the prior is defined as in Definition \ref{defin:prior} and for all $t\in\R$,  $1\leq k\leq l_n$ (or $k_n^*$ in case (D)), assume that 
 \begin{eqnarray} \label{cond:prior}
\frac{ \pi_k(\theta)}{\pi_k(\theta-tB_{n,k})} = 1 + o(1), \quad \mbox{if } \quad \sum_{j =1}^k(\theta_j-\theta_{0j})^2 \leq  \frac{\left(\log n\right)^2}{L(n)} \ep_n
\end{eqnarray}
 uniformly over $\{\theta; |\!| \theta -\theta_0|\!|_2 \leq 3(\log n)^2 \epsilon_n\}$. 
Assume also that 
\begin{eqnarray}\label{cond:psij}
\sup_{k \leq l_n}( |\!|\sum_{j>k} \psi_{cj}\phi_j |\!|_\infty + \sqrt{k}|\!| \sum_{j>k} \psi_{cj}\phi_j|\!|_2 ) = o\left( \frac{(\log n)^{-2}}{\sqrt{n}\epsilon_n^2}\right).
\end{eqnarray}
 Under assumptions of Theorem \ref{th:post:rate}
\begin{itemize}
\item  for all $z \in \R$
\begin{eqnarray} \label{resu:nonBVM}
\Pp^\pi\left[ \rn(\Psi(f) - \Psi(P_n) ) \leq z  |X^n\right] &=&
\sum_{k} p(k|X^n)\Phi_{V_{0k}} \left(z+\mu_{n,k}\right)+o_{\Pp_0}(1), \nonumber \\
\end{eqnarray}
where
\begin{itemize}
\item $V_{0k} = F_0(\psi^2_c)-F_0(\Delta_\psi^2)$.
\item $\mu_{n,k}= -\rn F_0[(\psi_c-\Pi_{f_0,k}\psi_c )\sum_{j\geq k+1}\theta_{0j}\phi_j]+ \G_n(\Delta_\psi)$
\end{itemize}
\item  In the case (D), if $\gamma\geq \beta$,
\begin{eqnarray} \label{BVM}
\Pp^\pi\left[ \rn(\Psi(f) - \Psi(P_n) ) \leq z  |X^n\right] &=&
\Phi_{V_0}\left(z\right)+o_{\Pp_0}(1),
\end{eqnarray}
where $V_{0} = F_0(\psi^2_c)$.
\end{itemize}
\end{thm}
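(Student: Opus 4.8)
The strategy is to apply Theorem~\ref{th:sieve:1} by verifying its three assumptions (A1), (A2), (A3) for the infinite-dimensional exponential family prior, then to identify the limiting object more precisely, exploiting the fact that the prior lives on finite-dimensional sieves $\mathcal F_k$. First I would note that Theorem~\ref{th:post:rate} directly provides a concentration rate in Hellinger distance, and since $\norm{\log f_0}_\infty<\infty$ and the posterior concentrates on sieves of controlled dimension $k\le l_n$ (with $l_n = l_0 n\epsilon_n^2/L(n)$), one can upgrade this to a rate in $V(f_0,f)$: on the event $\{h(f_0,f_\theta)\le u, k\le l_n\}$ one bounds $\norm{\log(f/f_0)}_\infty$ by something like $\sqrt{l_n}\,\norm{\theta-\theta_0}_2$ and then $V(f_0,f)\lesssim h^2(f_0,f)\exp(\norm{\log(f/f_0)}_\infty)$, which still goes to zero for a suitable $u_n=o(1)$ (here the $(\log n)^2$ factors in Theorem~\ref{th:post:rate} are harmless since $u_n$ need not be optimal). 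This gives (A1), and the same kind of estimate, controlling $\int|\log(f/f_0)|^3(f_0+f)$ by $\norm{\log(f/f_0)}_\infty\,V(f_0,f)$ up to constants, yields (A2) with $A_n$ the intersection of $A^1_{u_n}$ with $\{k\le l_n\}$.

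The heart of the argument is (A3), i.e.\ the ratio \eqref{cond:big:2}. Since the prior decomposes over $k$ and, conditionally on $k$, lives on $\mathcal F_k$, I would work $k$ by $k$: for fixed $k\le l_n$ the map $f_\theta\mapsto f_{\theta - t B_{n,k}}$ is well defined on $\mathcal F_k$ (it just shifts the first $k$ coordinates of $\theta$), and it is precisely the finite-dimensional shadow of the transformation $f_h\mapsto f_{h-t\bar\psi_{t,n}}$ needed in (A3), up to the error coming from replacing $\psi_c$ by its projection $\Pi_{f_0,k}\psi_c$ and from the curvature term hidden in $\bar\psi_{t,n}$. Condition \eqref{cond:prior} is exactly what makes the Jacobian of this shift negligible, $\pi_k(\theta)/\pi_k(\theta-tB_{n,k}) = 1+o(1)$ on the relevant $\ell_2$-ball, so the numerator and denominator integrals over $\mathcal F_k$ agree up to $1+o(1)$; condition \eqref{cond:psij} controls the tail $\Delta_\psi = \psi_c-\Pi_{f_0,k}\psi_c$ so that replacing $\bar\psi_{t,n}$ by $B_{n,k}$ (a $k$-dimensional object) in the exponent costs only $o(1)$ in the empirical-process term $G_n$ and the quadratic term $F_0((\cdot)^2)$, uniformly over $k\le l_n$. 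Summing over $k$ against $p(k|X^n)$ and using that the posterior mass outside $\{k\le l_n\}$ is exponentially small gives (A3) — but with a twist: the projection error does not vanish in the centering, it survives as the shift $\mu_{n,k}$ and the variance deficit $F_0(\Delta_\psi^2)$, which is why the conclusion is the mixture \eqref{resu:nonBVM} rather than a clean Gaussian.

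Concretely, I would redo the Laplace-transform computation of Section~\ref{subsec:BVM:line} but retaining the finite-$k$ structure: conditionally on $k$, the efficient influence function relative to the tangent space $\mathrm{lin}(\phi_1,\dots,\phi_k)$ is $\Pi_{f_0,k}\psi_c$, not $\psi_c$, so the posterior Laplace transform of $\rn(\Psi(f)-\Psi(P_n))$ restricted to $\mathcal F_k$ converges to $\exp(t\mu_{n,k} + t^2 V_{0k}/2)$ with $V_{0k}=F_0(\psi_c^2)-F_0(\Delta_\psi^2) = \norm{\Pi_{f_0,k}\psi_c}^2$ (after subtracting the constant term, using $\phi_0\equiv1$) and $\mu_{n,k}$ collecting the bias $-\rn F_0[\Delta_\psi\sum_{j\ge k+1}\theta_{0j}\phi_j]$ from the mismatch between $\Psi(P_n)$ and the projected model plus the stochastic term $G_n(\Delta_\psi)$. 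Mixing over $k$ yields \eqref{resu:nonBVM}. For the case (D) with $\gamma\ge\beta$, the single model has $k=k_n^*=n^{1/(2\beta+1)}$, and one checks $\rn F_0[\Delta_\psi\sum_{j> k_n^*}\theta_{0j}\phi_j]\to0$, $G_n(\Delta_\psi)\to0$ (both controlled by $\norm{\sum_{j>k_n^*}\psi_{cj}\phi_j}$ via \eqref{cond:psij} and the smoothness $\gamma$ of $\log f_0$, hence of the relevant tails), and $F_0(\Delta_\psi^2)\to0$, so $\mu_{n,k_n^*}=o_{\Pp_0}(1)$ and $V_{0k_n^*}\to V_0=F_0(\psi_c^2)$, giving \eqref{BVM}.

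The main obstacle I anticipate is making the passage from $\bar\psi_{t,n}$ to the finite-dimensional $B_{n,k}$ rigorous \emph{uniformly in $k\le l_n$}: $\bar\psi_{t,n}$ contains the logarithmic curvature correction $\frac{\rn}{t}\log F_0[\exp((h-t\psi_c)/\rn)]$, which on $A_n$ is of order $F_0(h\psi_c)/\rn + O(u_n)$ and must be shown not to disturb the shift on the first $k$ coordinates beyond an $o(1)$ Jacobian, while the condition \eqref{cond:prior} is only assumed on an $\ell_2$-ball of radius $(\log n)^2\epsilon_n$ — so one must first show that the posterior (within each $\mathcal F_k$) indeed concentrates on such a ball around $\theta_0$, which is \eqref{post:rate:1} of Theorem~\ref{th:post:rate}, and that the shifted parameter $\theta - tB_{n,k}$ stays in the slightly larger ball of radius $3(\log n)^2\epsilon_n$ where \eqref{cond:prior} is uniform, using $\norm{B_{n,k}}_2 = \norm{\Pi_{f_0,k}\psi_c}_2/\rn = O(n^{-1/2})$. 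Threading these quantitative bounds together, with the various $(\log n)^{\pm2}$ factors from Theorem~\ref{th:post:rate} and from \eqref{cond:psij} cancelling appropriately, is where the real work lies.
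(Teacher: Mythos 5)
Your proposal follows essentially the same route as the paper's own proof: establish (A1) via the $V\lesssim(\log n)^2h^2$ bound on sieves of dimension $k\le l_n$, derive (A2) from the sup-norm control on $\log(f_\theta/f_0)$ obtained from Lemma~\ref{ineg} and (\ref{contc}), and then, for (A3), decompose the Laplace-transform ratio over $k$, apply the finite-dimensional shift $\theta\mapsto\theta-tB_{n,k}$ with Jacobian controlled by (\ref{cond:prior}) and projection tail controlled by (\ref{cond:psij}), extracting the residual mean $\mu_{n,k}$ and variance deficit $F_0(\Delta_\psi^2)$ in exactly the way the paper's computation of $h_{\theta'}=h_\theta-t\bar\psi_{t,n}+t\Delta_\psi-\Delta_n$ and the subsequent $J_k$ bounds produce. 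You also correctly anticipate the two technical points the paper handles explicitly: showing $\|B_{n,k}\|_2=O(n^{-1/2})$ so the shifted $\theta$ stays in the $3(\log n)^2\epsilon_n$ ball where (\ref{cond:prior}) is uniform (the paper's inclusion $\Theta_k\cap A_{n,1}'\subset T_k(A_n')\subset\Theta_k\cap A_{n,2}'$), and that the curvature correction $\Delta_n$ is $o(1)$ uniformly in $k\le l_n$ precisely because of (\ref{cond:psij}).
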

The Bernstein-Von Mises property obtained in the case (D) is deduced by proving relation (\ref{cond:psij}) in this case. Indeed if $\gamma \geq \beta$ there exists $a>0$ such that $\sqrt{n} \epsilon_n^2 \leq n^{-a}$, besides 
\begin{eqnarray*}
 |\!| \sum_{j >k} \psi_{cj} \phi_j |\!|_\infty &\leq& 1 + |\!| \sum_{j \leq k} \psi_{cj} \phi_j |\!|_\infty \\
  &\leq& C \log n
 \end{eqnarray*}
 and $\sum_{j\geq k+1} \psi_{cj}^2 = 0(1/k)$, so that relation  (\ref{cond:psij}) is satisfied. Apart from this argument the proof of Theorem \ref{th:expo:BVM} is given in Section \ref{sec:th:expo:BVM}.
The first part of Theorem \ref{th:expo:BVM} shows that the posterior
distribution of $\rn(\Psi(f) - \Psi(P_n))$ is asymptotically a
mixture of Gaussian distributions with variances $V_0 - F_0(\Delta_\psi^2)$ and
 mean values $\mu_{n,k}$ with weight $p(k|X^n)$. To
obtain an asymptotic Gaussian distribution with mean zero and variance $V_0$ it is
necessary for $\mu_{n,k}$ to be small whenever $p(k|X^n)$ is not.
The conditions given in the second part of Theorem \ref{th:expo:BVM}
ensure that this is the case, however they are not necessary
conditions. Nevertheless, in Section \ref{subsec:ex:cdf}, we give a counter-example for which the Bernstein-Von Mises property is not satisfied in the cases (PH) and (D) with $\gamma<\beta$.  

We now discuss condition (\ref{cond:prior}) in three different examples. Note first that $A_n\subset \{\theta; |\!|\theta -\theta_0 |\!|_2\leq 3 (\log n)^2 \epsilon_n\}$ with $\theta \in \Theta_k$, $k \leq l_n$. 
\begin{itemize}
\item Gaussian: If $g$ is Gaussian  then for all $k \leq l_n$ (or $k_n^*$ in the case of a type (D) prior) and all $j \leq k$, $\theta_j\sim
\mathcal N( 0, \tau_0^2j^{-2\beta})$ and for  all $\theta \in A_n \cap \mathcal F_k$
\begin{eqnarray*}
  \frac{\sum_{j=1}^k {\psi_c}_j^2j^{2\beta}}{n} &\leq& \frac{Ck^{2\beta} }{n} \leq O(n^{2\beta -1} \epsilon_n^{4\beta}) = o(1) \\
  \frac{ \sum_{j=1}^k\theta_j {\psi_c}_jj^{2\beta }}{\rn } &=& \frac{\sum_{j=1}^k(\theta_j-\theta_{0j}) {\psi_c}_jj^{2\beta }+ \sum_{j=1}^k\theta_{0j} {\psi_c}_jj^{2\beta }}{\rn} \\
   &\leq&
 \frac{C}{\rn} \left[  |\!|\theta -\theta_0 |\!|k^{2\beta} +  (k^{2\beta-\gamma} +1) \right]  \\
 &=&  o(1).
\end{eqnarray*}
 This implies that uniformly over
$A_n $
$$\pi_k(\theta-B_{n,k}) = \pi_k(\theta)(1+o(1))$$

\item Laplace: If $g$ is Laplace, $g(x) \propto e^{-|x|}$,
\begin{eqnarray*}
\left|\log\left(g\left(\frac{\theta_j - t{\psi_c}_j/\rn}{\sqrt{\tau_j}}\right) \right) -
\log\left( g\left(\frac{\theta_j}{\sqrt{\tau_j}}\right)\right) \right| &\leq& C\frac{
|{\psi_c}_j|}{ \rn}
\end{eqnarray*}
So that
\begin{eqnarray*}
\left| \log \left[ \frac{\pi_k(\theta-B_n)}{\pi_k(\theta)}\right]
\right| &\leq& C\frac{ \sum_{j=1}^k j^{\beta}|{\psi_c}_j|}{\rn}
\\
 &\leq & C\frac{ k^{\beta}}{ \rn }=o(1),
\end{eqnarray*}
for all $\gamma >1/2$, $1>\beta >1/2$ in the cases (D) and (PH), and condition (\ref{cond:prior}) is satisfied.

\item Student:
In the Student case for $g$ we can use the calculations made in the
Gaussian case since
\begin{eqnarray*}
& &  \sum_{j =1}^{k}\log \left( 1 + C j^{2\beta} \theta_j^2
\right)-\log \left( 1 + C j^{2\beta} (\theta_j -
t{\psi_c}_j/\rn)^2 \right) \\
 &=& 0\left( \sum_{j=1}^k
 j^{2\beta}[(\theta_j-t{\psi_c}_j/\rn)^2 - \theta_j^2] \right)
 \\
  &=& o(1)
\end{eqnarray*}
Therefore in all these cases condition (\ref{cond:prior}) is
satisfied.
\end{itemize}
Interestingly Theorem \ref{th:expo:BVM} shows that parametric sieve models (increasing sequence of models) have a behaviour which is a mix between parametric and nonparametric models. Indeed if the posterior distribution puts most of its mass on $k$'s large enough the posterior distribution has a Bernstein Von Mises property centered on the empirical (nonparametric MLE) estimator with the correct variance whereas if it allows for $k$'s that are not large enough (corresponding to $\sum_{j=1}^k
 j^{2\beta}[(\theta_j-t{\psi_c}_j/\rn)^2 - \theta_j^2]$  or $\Delta_\psi$ not small enough) then the posterior distribution is not asymptotically Gaussian with the right centering, nor with the right variance. An extreme case corresponds to the situation where $F_0(\Delta_\psi^2) \neq o(1)$ under the posterior distribution, which is equivalent to $$\exists k_0, \quad \mbox{s.t. }\forall \epsilon >0 \quad \mbox{liminf}_{n \rightarrow \infty} \Pp_0^n\left[ \Pp^\pi\left[ k_0|X^n\right] >\epsilon  \right]>0.$$
 For each $k>0$ fixed, if $\inf_{\theta\in \R^{k}} K(f_0,f_\theta)>0$, since the model is regular, there exists $c>0$ such that 
 $\Pp_0^n \left[ \Pp^\pi\left[ k|X^n\right] > e^{-nc }\right] \rightarrow 1$. Therefore, $F_0(\Delta_\psi^2) \neq o(1)$ under the posterior distribution if there exists $k_0$ such that 
  $\inf_{\theta\in \R^{k_0}} K(f_0,f_\theta)>0$, i.e. if there exists $\theta_0\in \R^{k_0}$ such that $f_0 = f_{\theta_0}$. In that case it can be proved that $\Pp^\pi[k_0|X^n]= 1+o_P(1)$, see \cite{CR}, and the Bernstein Von Mises theorem to be expected is the parametric one, under the model $\Theta_{k_0}$ which is regular. However, even if
  $\Delta_\psi =o_P(1)$, the posterior distribution might not satisfy the non parametric Bernstein Von Mises property with the correct centering. 
   We illustrate in the following section this issue in the special case of the cumulative distribution function.
\subsection{An example: the cumulative distribution function} \label{subsec:ex:cdf}
As a special case, consider the functional on $f$ to be the
cumulative distribution function calculated at a given point $x_0$. As
seen in Section \ref{sec:BVM}, ${\psi_c}(x) = \1_{x \leq x_0} -
F_0(x_0)$. We have $F_n(x_0) = P_n(\psi)$ and recall that the
variance of $\G_n(\psi)$ under $P_0$ is equal to $V_0 =
F_0(x_0)(1-F_0(x_0))$.

As an illustration, consider  the case of the Fourier basis. The case of wavelet bases is dealt with in the same way. In other words for $\la\geq 1$, $\phi_{2\la-1}(x)=\sqrt{2}\sin(2\pi\la x),$   $\phi_{2\la}(x)=\sqrt{2}\cos(2\pi\la x)$ and $\phi_0(x)= 1$.

\begin{corollary}
If the prior density $g$ on the coefficients is Gaussian or Laplace
then if $f_0 \in \mathcal S_\gamma$, with $\gamma \geq \beta$
 and if the prior on $k$ is the Dirac mass on $k_n^*$
then the posterior distribution of $\rn(F(x_0) - F_n(x_0))$ is
asymptotically Gaussian with mean 0 and variance $V_0$.

If the prior density $g$ is Student and if $\gamma\geq\beta >1$, 
 then the same result remains valid.
\end{corollary}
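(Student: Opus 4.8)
The plan is to obtain this corollary simply as the case (D) instance of Theorem~\ref{th:expo:BVM}, applied to the functional $\Psi(f)=F(x_0)=\int\1_{x\leq x_0}f(x)dx$. First I would record the data of this particular $\Psi$: the representing function is $\psi(x)=\1_{x\leq x_0}$, which is bounded with $\norm{\psi}_\infty=1$, and, as recalled above, $\psi_c(x)=\1_{x\leq x_0}-F_0(x_0)$; hence $\Psi(P_n)=P_n(\psi)=F_n(x_0)$ and the candidate limiting variance is $V_0=F_0(\psi_c^2)=F_0(x_0)(1-F_0(x_0))$. Next I would check that Theorem~\ref{th:post:rate} applies: $f_0\in\mathcal S_\gamma$ with $\gamma\geq\be>1/2$ gives, via the embedding $W^\gamma\hookrightarrow C([0,1])$ together with the positivity of $f_0$, both $\norm{\log f_0}_\infty<\infty$ and $\log f_0\in W^\gamma=\mathcal B^\gamma_{2,2}$, i.e. the hypothesis of Theorem~\ref{th:post:rate} with $p=q=2$; the posterior concentration rate there is then $\epsilon_n=\ep_0\log n\,n^{-\be/(2\be+1)}$. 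It therefore remains only to verify the two extra conditions of Theorem~\ref{th:expo:BVM}, namely (\ref{cond:prior}) and (\ref{cond:psij}).

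For (\ref{cond:prior}) I would appeal directly to the computations already carried out in the discussion following Theorem~\ref{th:expo:BVM}, which establish it for $g$ Gaussian, Laplace and Student, for all $\gamma>1/2$ and $1/2<\be<1$, in the cases (D) and (PH); the only facts about $\psi$ that enter are the elementary bounds $\sum_{j\leq k}\psi_{cj}^2 j^{2\be}=O(k^{2\be})$ (since $\sum_j\psi_{cj}^2=\norm{\psi_c}_2^2<\infty$) and $\sum_{j=1}^k\theta_{0j}\psi_{cj}j^{2\be}=O(k^{2\be-\gamma}+1)$, the latter from $\sum_j\theta_{0j}^2 j^{2\gamma}<\infty$ (which is exactly $\log f_0\in W^\gamma$) and Cauchy--Schwarz. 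In the Student case the extra assumption $\gamma>1$ in the statement is precisely what Remark~\ref{tailp} requires: the Student density has only polynomial tails, so it is not literally of the type in Definition~\ref{defin:prior}, and one invokes Remark~\ref{tailp} to keep the posterior rate of Theorem~\ref{th:post:rate}, on which Theorem~\ref{th:expo:BVM} is built, valid.

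For (\ref{cond:psij}) I would reuse the short argument given immediately after Theorem~\ref{th:expo:BVM} for the case (D) with $\gamma\geq\be$: here $l_n=k_n^*=n^{1/(2\be+1)}$, and since $\be>1/2$ one has $\rn\epsilon_n^2=\ep_0^2(\log n)^2 n^{-(2\be-1)/(2(2\be+1))}\to 0$ polynomially, so the right-hand side of (\ref{cond:psij}) tends to infinity faster than any power of $\log n$; on the left-hand side, $\1_{x\leq x_0}$ is of bounded variation, so its Fourier coefficients satisfy $|\psi_{cj}|=O(1/j)$, whence $\sum_{j>k}\psi_{cj}^2=O(1/k)$, $\sqrt{k}\,\norm{\sum_{j>k}\psi_{cj}\phi_j}_2=O(1)$, and $\norm{\sum_{j>k}\psi_{cj}\phi_j}_\infty\leq\norm{\psi_c}_\infty+\norm{\sum_{j\leq k}\psi_{cj}\phi_j}_\infty\leq 1+C\log k\leq C'\log n$ by the $O(\log k)$ growth of the $L^1$ norm of the Dirichlet kernel; the wavelet case is identical, using $\1_{x\leq x_0}\in\mathcal B^1_{1,\infty}$ so that $\sum_k\psi_{c,jk}^2=O(2^{-j})$ and the reconstruction up to level $j$ has sup-norm $O(j)=O(\log n)$. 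Once both conditions are in place, Theorem~\ref{th:expo:BVM} applies in case (D) with $\gamma\geq\be$ and its conclusion (\ref{BVM}) is exactly the asserted convergence $\Pp^\pi[\rn(F(x_0)-F_n(x_0))\leq z|X^n]\to\Phi_{V_0}(z)$ in $\Pp_0$-probability. I do not expect a genuine obstacle: the heavy lifting sits entirely inside Theorems~\ref{th:post:rate} and~\ref{th:expo:BVM}, the only ingredient specific to the corollary being the decay $|\psi_{cj}|=O(1/j)$ of the coefficients of the indicator and the $O(\log n)$ sup-norm of its partial sums; the sole delicate point is the Student case, which is not covered by Definition~\ref{defin:prior} and hence needs the condition $\gamma>1$ of Remark~\ref{tailp}. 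It is worth noting that this verification of (\ref{cond:psij}) is exactly what forces $\gamma\geq\be$: for $\gamma<\be$, or in the case (PH), the bias term $\mu_{n,k}$ of (\ref{resu:nonBVM}) no longer vanishes and the Bernstein--Von Mises property genuinely breaks down, as the counterexample of Section~\ref{subsec:ex:cdf} shows.
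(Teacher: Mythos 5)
Your proposal is correct and follows the same route as the paper, whose entire proof is the one-line remark that the corollary is a direct application of Theorem~\ref{th:expo:BVM} (case (D)); you have simply made explicit the verifications of (\ref{cond:prior}) and (\ref{cond:psij}) that the paper itself carries out in the discussion surrounding that theorem, including the $O(1/j)$ decay of the Fourier coefficients of the indicator, the $O(\log n)$ bound on the sup-norm of its partial sums, and the appeal to Remark~\ref{tailp} to cover the polynomial-tailed Student prior. The only point worth flagging is that Remark~\ref{tailp} is stated in the paper for case (PH), not (D), so strictly speaking the Student part of the corollary relies on the (unwritten but routine) observation that the same tail argument carries over to the Dirac prior; this is an imprecision inherited from the paper rather than a gap in your argument.
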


This result is a direct application of Theorem \ref{th:expo:BVM}.

\noindent

\textit{Counter-example:} In this remark we illustrate the fact that in the
case of a random $k$, which leads to an adaptive minimax rate of
convergence for the posterior distribution we might not have a
Bernstein - Von Mises theorem. Consider a density $f_0$ in the form

$$f_0 =\exp\left( \sum_{j \geq k_0} \theta_{0j} \phi_j(u) du
-c(\theta_0) \right)$$ where $k_0$ is fixed but can be large and
$\theta_{0,2j} =0$ and $$\theta_{0,2j-1} =  \sin(2\pi
jx)/[j^{\gamma+1/2}\sqrt{\log j}\log \log j].$$ Then for $J_1 >3$
\begin{eqnarray*}
\sum_{j\geq J_1} \theta_{0j}^2 j^{2\gamma}&\leq& \sum_{j \geq
 J_1}\frac{1}{ j \log j \log \log j^2} \\
  &\leq& \int_{j_1}^{\infty}\frac{ 1 }{ x\log x (\log \log x)^2} dx \\
   &=& \frac{ 1 }{ \log \log J_1 },
\end{eqnarray*}
and similarly
\begin{eqnarray} \label{LB:ex:1}
\sum_{j\geq J_1} \theta_{0j}^2 &\leq& \sum_{j \geq
 J_1}\frac{1}{ j^{2\gamma+1} \log j \log \log j^2} \nonumber \\
  &\leq& \int_{j_1}^{\infty}\frac{ 1 }{ x^{2\gamma+1}\log x (\log \log x)^2} dx \nonumber \\
   &=& \left[-\frac{ 1}{2\gamma x^{2\gamma} \log x (\log \log x)^2
   }\right]_{J_1}^\infty(1+o(1)) \nonumber\\
   &=&
 \frac{ 1}{2\gamma J_1^{2\gamma} \log J_1 (\log \log J_1)^2
   }(1+o(1))
\end{eqnarray}
when $J_1 \rightarrow \infty$.

Consider a Poisson distribution on $k$ with
parameter $\nu >0$ fixed    then for such $f_0$, if $k_n=
n^{1/(2\gamma+1)}(\log n)^{-2/(2\gamma+1)}(\log \log
n)^{-2/(2\gamma+1)}$ and  $k_1$ is large enough
$$\Pp^\pi[k \leq k_1k_n|X^n] = 1 + o(1).$$

 We now study the mean terms $\mu_{n,k}$ and we show that if $k \leq k_1 k_n$, 
$\mu_{n,k} \neq o(1)$ nor  can $\Pp^\pi(k|X^n)$  be neglected.

First note that when $k\rightarrow \infty $ $G_n(\Delta_\psi)=o(1)$
\begin{eqnarray} \label{eq:munk:1}
\mu_{n,k} &=& \rn F_0 \left[({\psi_c} - \Pi_{f_0,k} {\psi_c})
( l_0 - \Pi_{f_0,k}l_0) \right] \nonumber \\
 &=&\rn F_0 \left[(\sum_{j=k+1}^\infty \psi_{cj}\phi_j)
( l_0 - \Pi_{f_0,k}l_0) \right] \nonumber  \\
 &=&
\rn \int  \left[({\psi_c} - \Pi_{f_0,k} {\psi_c}) ( l_0 -
\Pi_{f_0,k}l_0) \right] \nonumber \\
 & &  + \rn  \int (f_0-1)  \left[({\psi_c} -
\Pi_{f_0,k} {\psi_c}) ( l_0 - \Pi_{f_0,k}l_0) \right]
\end{eqnarray}
We first consider the first term of the right hand side of
(\ref{eq:munk:1}).
\begin{eqnarray*}
\mu_{n,k,1} &=& \rn \int  \left[(\sum_{j=k+1}^\infty \psi_{cj}\phi_j)( l_0 - \Pi_{f_0,k}l_0) \right] \\
  &=&
  \rn \sum_{j=k+1}^\infty \psi_{cj} \theta_{0j} \\
   &=& \rn \sum_{l \geq k/2} \frac{ \sin^2(2\pi xl) }{
   (2l+1)^{\gamma+3/2}\log (2l+1)^{1/2} \log \log (2l+1) }
\end{eqnarray*}
and if $x=1/4$ we have
\begin{eqnarray*}
\mu_{n,k,1} &=& \rn \sum_{j \geq k/4-1/2} \frac{ 1 }{
   (4j+3)^{\gamma+3/2}(\log 4j+3)^{1/2} \log \log (4j+3)} \\
 &\leq& C \rn\frac{k^{-\gamma-1/2}}{\sqrt{\log k} \log \log k} \\
\mu_{n,k,1}  &\geq&
C'\rn \frac{k^{-\gamma-1/2}}{\sqrt{\log k} \log \log k}.
\end{eqnarray*}
Note that there exists $c>0$ such that for all $k \leq k_n$
$$\mu_{n,k,1} \geq c \sqrt{\log n}.$$

We now consider the second term of (\ref{eq:munk:1}). Let $M_{1,k}$
denote the projection on $(\phi_0,...,\phi_k)$ with respect to the
scalar product $<f,g>_2 = \int fg(u)du$ and note that
 $$
 \Pi_{f_0,k} l_0 = M_{1,k} l_0 + \Pi_{f_0,k}[\sum_{j=k+1}^\infty
 \theta_{0j} \phi_j ]$$
\begin{eqnarray*}
|\mu_{n,k,2}| &=& \left| \rn  \int (f_0-1)
\left[(\sum_{j=k+1}^\infty
\psi_j \phi_j) ( l_0 - \Pi_{f_0,k}l_0) \right] \right|\\
 &=&
\left|  \rn  \int (f_0-1)  \left[(\sum_{j=k+1}^\infty \psi_{cj}\phi_j)
( l_0 - M_{1,k}l_0) \right] \right| \\
 & & \quad +  \left| \rn  \int (f_0-1)
\left[(\sum_{j=k+1}^\infty
\psi_j \phi_j) ( M_{1,k}l_0 - \Pi_{f_0,k}l_0) \right] \right|\\
 &\leq & 2|f_0-1|_{\infty}\left(\sum_{j=k+1}^\infty
 \psi_{cj}^2\right)^{1/2}\left(\sum_{j=k+1}^\infty \theta_{0,j}^2
 \right)^{1/2} \\
  &\leq& C\rn|f_0-1|_{\infty}\frac{k^{-\gamma-1/2}}{\sqrt{\log k} \log \log k}
\end{eqnarray*}
By choosing $k_0$ large enough $|f_0-1|_\infty$ can be made as small
as need be so that we finally obtain that there exists $c>0$ such that for all $k\leq k_n$
$$ \mu_{n,k} \geq c\sqrt{\log n}.$$
Note that in case (D) with $\gamma<\beta$, the same calculations lead to 
$$ \mu_{n,k_n^*} \geq cn^{\frac{\beta-\gamma}{4\beta+2}}(\log n)^{-\frac{1}{2}}(\log\log n)^{-1}.$$
Thus in this case the posterior distribution is not asymptotically Gaussian with mean $F_n(x) $ and variance $F_0(x)(1-F_0(x))/n$. Whether it is asymptotically equivalent to a mixture of Gaussians is not clear. It would be a consequence of the way the posterior distribution of $k$ concentrates as $n$ goes to infinity. In the case (D), the posterior distribution is asymptotically Gaussian with mean $F_n(x)-\mu_{n,k_n^*}$.
\section{ Proofs} \label{sec:proofs}
In this section we prove Theorems \ref{th:sieve:1}, \ref{th:post:rate}  and \ref{th:expo:BVM}.
In the sequel, $C$ denotes a generic  positive constant whose value is of no importance.
\subsection{Proof of Theorem \ref{th:sieve:1}} \label{sec:th:sieve:1}
Let $Z_n=\rn(\Psi(f) - \Psi(P_n)).$ We have
\begin{equation}\label{convAn}
\Pp^\pi\left\{ A_{n}|X^n\right\}=1 + o_{\Pp_0}(1).
 \end{equation}
So, it is enough to prove that conditionally on $A_n$ and $X^n$, the distribution of $Z_n$ converges to the distribution of a Gaussian variable whose variance is $F_0(\psi_c^2).$
This will be established if for any $t\in\R$,
\begin{equation}\label{convL}
\lim_{n\to +\infty} L_n(t)=\exp\left(\frac{t^2}{2}F_0\left[\psi_c^2\right]\right),
\end{equation}
 where $L_n(t)$ is the Laplace transform of $Z_n$ conditionally on $A_n$ and $X^n$:
\begin{eqnarray*}
 L_n(t)&=&\esp^\pi\left[\exp(t \rn(\Psi(f) - \Psi(P_n)) ) | A_n, X^n \right] \\
&=&\frac{\esp^\pi\left[\exp(t \rn(\Psi(f) - \Psi(P_n)) ) \1_{A_n}(f) | X^n \right]}
{\Pp^\pi\left\{ A_{n}|X^n\right\}}\\
&=&\frac{\int_{A_{n}} \exp\left(t\rn(\Psi(f) - \Psi(P_n))+l_n(f)-l_n(f_0)\right) d\pi(f) }{\int_{A_{n}} \exp\left(l_n(f)-l_n(f_0)\right) d\pi(f) }.
\end{eqnarray*}
We set for any $x$,
 \begin{eqnarray}\label{def:Bhn}
 B_{h,n}(x) = \int_0^1(1-u)e^{uh(x)/\sqrt{n}}du.
 \end{eqnarray}

so,
$$\exp\left(\frac{h(x)}{\sqrt{n}}\right)=1+\frac{h(x)}{\sqrt{n}}+\frac{h^2(x)}{n}B_{h,n}(x),$$
which implies that
$$f(x)-f_0(x)=f_0(x)\left(\frac{h(x)}{\sqrt{n}}+\frac{h^2(x)}{n}B_{h,n}(x)\right)$$
and
\begin{eqnarray*}
t\rn(\Psi(f) - \Psi(P_n))&=&-tG_n(\psi_c)+t\rn\left(\int\psi_c(x)(f(x)-f_0(x))dx\right)\\
&=&-tG_n(\psi_c)+tF_0(h\psi_c)+\frac{t}{\rn}F_0(h^2B_{h,n}\psi_c).
\end{eqnarray*}
Since $$l_n(f)-l_n(f_0)=-\frac{F_0(h^2)}{2}+ G_n(h) +  R_n(h),$$
we have
\begin{eqnarray*}
L_n(t)&=&\frac{\int_{A_{n}} \exp\left(G_n(h-t\psi_c)+tF_0(h\psi_c)+\frac{t}{\rn}F_0(h^2B_{h,n}\psi_c)-\frac{F_0(h^2)}{2}+R_n(h)\right)d\pi(f)}{\int_{A_{n}}\exp\left(-\frac{F_0(h^2)}{2}+ G_n(h) +  R_n(h)\right)d\pi(f)}\\
 &=&\frac{\int_{A_{n}} \exp\left(-\frac{F_0((h-t\bar{\psi}_{t,n})^2)}{2}+ G_n(h-t\bar{\psi}_{t,n}) +
R_n(h-t\bar{\psi}_{t,n})+U_{n,h}\right)d\pi(f)}{\int_{A_{n}}\exp\left(-\frac{F_0(h^2)}{2}+ G_n(h) +  R_n(h)\right)d\pi(f)},
\end{eqnarray*}
where straightforward computations show that
\begin{eqnarray*}
 U_{n,h}&=&tF_0(h(\psi_c-\bar{\psi}_{t,n}))+\frac{t^2}{2}F_0(\bar{\psi}_{t,n}^2)+R_n(h)-R_n(h-t\bar{\psi}_{t,n})+\frac{t}{\rn}F_0(h^2B_{h,n}\psi_c)\\
&=&tF_0(h\psi_c)+t\rn F_0(\bar{\psi}_{t,n})+\frac{t}{\rn}F_0(h^2B_{h,n}\psi_c)\\
&=&tF_0(h\psi_c)+n\log\left(
F_0\left[\exp\left(\frac{h}{\rn} - \frac{t\psi_c}{\rn}\right)\right]\right)+\frac{t}{\rn}F_0\left(h^2B_{h,n}\psi_c\right).
\end{eqnarray*}
Now, let us study each term of the last expression.
We have
\begin{eqnarray*}
F_0\left[\exp\left(\frac{h}{\rn} - \frac{t\psi_c}{\rn}\right)\right]&=&F_0\left[e^{\frac{h}{\rn}}\left(1-\frac{t\psi_c}{\rn}+\frac{t^2}{2n}\psi_c^2\right)\right]+0(n^{-\frac{3}{2}})\\
&=&1-\frac{t}{\rn}F_0\left[e^{\frac{h}{\rn}}\psi_c\right]+\frac{t^2}{2n}F_0\left[e^{\frac{h}{\rn}}\psi_c^2\right]+0(n^{-\frac{3}{2}}).
\end{eqnarray*}
So,
$$F_0\left[e^{\frac{h}{\rn}}\psi_c\right]=\frac{F_0[h\psi_c]}{\rn}+\frac{F_0[h^2B_{h,n}\psi_c]}{n}; \quad F_0\left[e^{\frac{h}{\rn}}\psi_c^2\right]
=F_0\left[\psi_c^2\right]+\frac{F_0[h\psi_c^2]}{\rn}+\frac{F_0[h^2B_{h,n}\psi_c^2]}{n}.$$
Note that, on $A_n$, we have $F_0(h^2)=0(nu_n^2)$ and $F_0\left(h^2 B_{h,n}\right)=o(n)$. Therefore, uniformly on $A_n$,
 \begin{eqnarray*}
F_0\left[\exp\left(\frac{h}{\rn} - \frac{t\psi_c}{\rn}\right)\right]&=&
1-\frac{t}{\rn}\left(\frac{F_0[h\psi_c]}{\rn}+\frac{F_0[h^2B_{h,n}\psi_c]}{n}\right)\\&&
\hspace{1cm}+\frac{t^2}{2n}\left(F_0\left[\psi_c^2\right]+\frac{F_0[h\psi_c^2]}{\rn}+\frac{F_0[h^2B_{h,n}\psi_c^2]}{n}\right)+o\left(n^{-1}\right)\\
&=&1 - \frac{t}{n}\left[ F_0[h\psi_c] +\frac{ F_0[h^2B_{h,n}\psi_c] }{\rn} - \frac{tF_0(\psi_c^2)}{2} + o(1) \right]\\&=&1+ o\left(n^{-1/2}\right)
\end{eqnarray*}
  and
\begin{eqnarray*}
n\log\left(F_0\left[\exp\left(\frac{h}{\rn} - \frac{t\psi_c}{\rn}\right)\right]\right)
&=&-t\left[ F_0(h\psi_c) +\frac{ F_0[h^2B_{h,n}\psi_c] }{\rn} - \frac{tF_0(\psi_c^2)}{2}  \right] +o(1).
\end{eqnarray*}
Finally,
$$U_{n,h}=\frac{t^2}{2}F_0\left[\psi_c^2\right]+o(1)$$
and  up to a multiplicative factor equal to $1 + o(1)$,
$$
L_n(t)=\exp\left(\frac{t^2}{2}F_0\left[\psi_c^2\right]\right)\frac{\int_{A_{n}} \exp\left(-\frac{F_0((h-t\bar{\psi}_{t,n})^2)}{2}+ G_n(h-t\bar{\psi}_{t,n}) +
R_n(h-t\bar{\psi}_{t,n})\right)d\pi(f)}{\int_{A_{n}}\exp\left(-\frac{F_0(h^2)}{2}+ G_n(h) +  R_n(h)\right)d\pi(f)}.
$$
Finally (A3) implies (\ref{convL}) and the theorem is proved.

\subsection{Proof of Theorem \ref{th:post:rate}} \label{sec:th:post:rate}
We first give a preliminary lemma which will be used extensively in the sequel.
\subsubsection{Preliminary lemma} \label{subsec:lemma}
Let us first state the following lemma.
\begin{lemma}\label{ineg}
Set $K_n=\{1,2,\dots, k_n\}$ with $k_n\in\Ne^*$.
Assume either of the following two cases:
\begin{itemize}
\item[-] $\ga>0$, $p=q=2$ when $\varPhi$ is the Fourier basis
\item[-] $0<\ga<r$, $2\leq p\leq \infty,$ $1\leq q \leq \infty$ when $\varPhi$ is the wavelet basis with $r$ vanishing moments.
\end{itemize}
Then the following results hold.
\begin{itemize}
\item[-] There exists a constant $c_{1,\varPhi}$ depending only on $\varPhi$  such that for any $\th=(\th_\la)_{\la}\in\R^{k_n}$,
\begin{equation}\label{normsup}
\left\|\sum_{\la\in K_n}\thl\phi_\la\right\|_\infty\leq c_{1,\varPhi}\sqrt{k_n}\norm{\th}_{\ell_2}.
\end{equation}
\item[-] If $\log(f_0)\in{\mathcal B}_{p,q}^{\ga}(R)$, then there exists  $c_{2,\ga}$ depending  on $\ga$ only such that
\begin{equation}\label{sob}
\sum_{\la\notin K_n}\thol^2\leq c_{2,\ga}\;R^2k_n^{-2\ga}.
\end{equation}
\item[-] If $\log(f_0)\in{\mathcal B}_{p,q}^{\ga}(R)$ with $\ga>\frac{1}{2}$, then there exists $c_{3,\varPhi,\ga}$ depending  on $\varPhi$ and $\ga$ only such that:
\begin{equation}\label{normsupsob}
\left\|\sum_{\la\notin K_n}\thol\phi_\la\right\|_{\infty}\leq c_{3,\varPhi,\ga}\;R \;k_n^{\frac{1}{2}-\ga}.
\end{equation}
\end{itemize}
\end{lemma}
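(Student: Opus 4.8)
The plan is to establish the three inequalities separately, exploiting in each case the structure of the two bases (compact frequency support for Fourier, compact spatial support combined with the scaling of wavelet coefficients for the wavelet case) together with the Besov/Sobolev characterization of $\log(f_0)$ already recalled in Section \ref{orthobasis}.

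For \eqref{normsup}, the idea is a pointwise Cauchy--Schwarz bound. For the Fourier basis we simply use $\|\phi_\la\|_\infty\leq\sqrt2$ for all $\la$, so that $\|\sum_{\la\in K_n}\th_\la\phi_\la\|_\infty\leq\sqrt2\sum_{\la\in K_n}|\th_\la|\leq\sqrt2\sqrt{k_n}\,\|\th\|_{\ell_2}$ by Cauchy--Schwarz. For the wavelet basis the argument must be organized by resolution level: write $K_n$ as the union of the coarse term and dyadic blocks $\{\la=2^j+k:\ 0\leq k\leq 2^j-1\}$ up to the largest level $J$ with $2^J\lesssim k_n$; on each level, at a fixed $x$ only a bounded number (depending on the support length $A$ of the mother wavelet) of the $\varPsi_{jk}$ are nonzero, each bounded by $2^{j/2}\|\varPsi\|_\infty$, so $|\sum_k\th_{jk}\varPsi_{jk}(x)|\lesssim 2^{j/2}(\sum_k\th_{jk}^2)^{1/2}$; summing the geometric-type series over $j\leq J$ and using $2^{J/2}\lesssim\sqrt{k_n}$ gives the claim. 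This step is standard and I expect no real difficulty.

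For \eqref{sob}, the point is that $\log(f_0)\in\mathcal B^\ga_{p,q}(R)$ with $p\geq 2$ controls the $\ell_2$ norm of the tail coefficients. Using the embedding $\mathcal B^\ga_{p,q}\hookrightarrow\mathcal B^\ga_{2,q}\hookrightarrow\mathcal B^\ga_{2,\infty}$ (valid since $p\geq 2$ on $[0,1]$), one has $\sup_{j}2^{2j\ga}\sum_k\th_{jk}^2\lesssim R^2$, hence $\sum_{\la\notin K_n}\th_{0\la}^2=\sum_{j>J}\sum_k\th_{0,jk}^2\lesssim R^2\sum_{j>J}2^{-2j\ga}\lesssim R^2 2^{-2J\ga}\lesssim R^2 k_n^{-2\ga}$; the Fourier case is the direct computation $\sum_{\la>k_n}\th_{0\la}^2\leq k_n^{-2\ga}\sum_{\la>k_n}|\la|^{2\ga}\th_{0\la}^2\leq R^2 k_n^{-2\ga}$. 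For \eqref{normsupsob}, I would combine the level-wise sup-norm bound used in \eqref{normsup} with the coefficient decay from \eqref{sob}: on level $j$, $\|\sum_k\th_{0,jk}\varPsi_{jk}\|_\infty\lesssim 2^{j/2}(\sum_k\th_{0,jk}^2)^{1/2}\lesssim 2^{j/2}R\,2^{-j\ga}=R\,2^{j(1/2-\ga)}$, and since $\ga>1/2$ this is summable over $j>J$, giving $\lesssim R\,2^{J(1/2-\ga)}\lesssim R\,k_n^{1/2-\ga}$; again the Fourier case follows from Cauchy--Schwarz: $\|\sum_{\la>k_n}\th_{0\la}\phi_\la\|_\infty\leq\sqrt2\sum_{\la>k_n}|\th_{0\la}|\leq\sqrt2(\sum_{\la>k_n}|\la|^{2\ga}\th_{0\la}^2)^{1/2}(\sum_{\la>k_n}|\la|^{-2\ga})^{1/2}\lesssim R\,k_n^{1/2-\ga}$, the last sum converging precisely because $\ga>1/2$.

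The only mildly delicate point is bookkeeping the passage from the running index $\la$ to the dyadic pair $(j,k)$ and checking that the finite overlap constant in the wavelet sup-norm estimate depends only on $\varPhi$ (through $A$), not on $n$ or the level; once that is set up, all three bounds reduce to summing geometric series. I do not anticipate a genuine obstacle here — the lemma is a technical toolbox assembled from classical wavelet/Fourier estimates and the Besov norm definition stated above.
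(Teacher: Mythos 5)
Your proposal is correct and follows essentially the same route as the paper: Cauchy--Schwarz (globally for Fourier, level-by-level using the finite-overlap property of compactly supported wavelets), the embedding $\mathcal B^\gamma_{p,q}\subset\mathcal B^\gamma_{2,\infty}$ for $p\geq 2$, and geometric-series summation over dyadic levels, with $\gamma>1/2$ guaranteeing convergence of $\sum_\lambda|\lambda|^{-2\gamma}$ or of $\sum_j 2^{j(1/2-\gamma)}$. The only cosmetic difference is that for \eqref{normsup} in the wavelet case the paper performs a single global Cauchy--Schwarz and then bounds $\sum_{\lambda\in K_n}\phi_\lambda^2(x)$ by the overlap constant times $\sum_{j}2^j$, whereas you apply Cauchy--Schwarz per level and then sum; these are equivalent.
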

\begin{proof}
Let us first consider the Fourier basis. We have:
\begin{eqnarray*}
\left\|\sum_{\la\in K_n}\thl\phi_\la\right\|_\infty&\leq&\sum_{\la\in K_n}|\thl|\times\norm{\phi_\la}_{\infty}\\
&\leq&|\!|\phi |\!|_\infty \sum_{\la\in K_n}|\thl|,
\end{eqnarray*}
which proves (\ref{normsup}). Inequality (\ref{sob}) follows from the definition of ${\mathcal B}_{2,2}^{\ga}=W^\ga$. To prove (\ref{normsupsob}), we use the following inequality: for any $x$,
\begin{eqnarray*}
 \left|\sum_{\la\notin K_n}\thol\phi_\la(x)\right|&\leq&|\!|\phi |\!|_\infty \sum_{\la\notin K_n}|\thol|\\
&\leq&|\!|\phi |\!|_\infty\left(\sum_{\la\notin K_n}|\la|^{2\ga}\thol^2\right)^{\frac{1}{2}}\left(\sum_{\la\notin K_n}|\la|^{-2\ga}\right)^{\frac{1}{2}}.
\end{eqnarray*}
Now, we consider the wavelet basis. Without loss of generality, we assume that $\log_2(k_n+1)\in\Ne^*$. We have for any $x$,
\begin{eqnarray*}
\left|\sum_{\la\in K_n}\thl\phi_\la(x)\right|&\leq&\left(\sum_{\la\in K_n}\thl^2\right)^{\frac{1}{2}}\left(\sum_{\la\in K_n}\phi_\la^2(x)\right)^{\frac{1}{2}}\\
&\leq&\norm{\th}_{\ell_2}\left(\sum_{0\leq j\leq \log_2(k_n)}\sum_{k=0}^{2^j-1}\varPsi_{jk}^2(x)\right)^{\frac{1}{2}}.
\end{eqnarray*}
Since $\varPsi(x)=0$ for $x\notin [-A,A]$,
$$\mbox{card}\left\{k\in\{0,\dots,2^j-1\}:\quad\varPsi_{jk}(x)\not=0\right\}\leq 3(2A+1).$$
(see \cite{Mal}, p. 282 or \cite{Mey}, p. 112). So, there exists $c_\varPsi$ depending only on $\varPsi$ such that
\begin{eqnarray*}
\left|\sum_{\la\in K_n}\thl\phi_\la(x)\right|&\leq&\norm{\th}_{\ell_2}\left(\sum_{0\leq j\leq \log_2(k_n)}3(2A+1)2^jc_\varPsi^2\right)^{\frac{1}{2}},
\end{eqnarray*}
which proves (\ref{normsup}). For the second point, we just use the inclusion ${\mathcal B}^{\ga}_{p,q}(R)\subset{\mathcal B}^{\ga}_{2,\infty}(R)$ and
$$
\sum_{\la\notin K_n}\thol^2=\sum_{j>\log_2(k_n)}\sum_{k=0}^{2^j-1}{\theta^2_0}_{jk}
\leq R^2\sum_{j>\log_2(k_n)}2^{-2j\ga}\leq \frac{R^2}{1-2^{-2\ga}}k_n^{-2\ga}.$$
Finally, for the last point, we have for any $x$:
\begin{eqnarray*}
\left|\sum_{\la\notin K_n}\thol\phi_\la(x)\right|&\leq&\sum_{j>\log_2(k_n)}\left(\sum_{k=0}^{2^j-1}{\theta^2_0}_{jk}\right)^{\frac{1}{2}}\left(\sum_{k=0}^{2^j-1}\varPsi^2_{jk}(x)\right)^{\frac{1}{2}}\\
&\leq& Ck_n^{\frac{1}{2}-\ga},
\end{eqnarray*}
where $C \leq R(3(2A+1))^{\frac{1}{2}}c_\varPsi(1-2^{\frac{1}{2}-\ga})^{-1}$.
\end{proof}


\subsubsection{ Proof of Theorem \ref{th:post:rate}}
Denote for any $n$,
$$B_n(\epsilon_n) = \{f\in{\mathcal F}:\quad K(f_0,f) \leq
\epsilon_n^2, \ V(f_0,f) \leq \epsilon_n^2 \},$$
To prove Theorem \ref{th:post:rate}, we use the following version of the theorem on posterior
convergence rates. Its proof is not given, but it is a slight modification of Theorem 2.4 of \cite{GGVdV}.

\begin{thm}\label{th:rate:app}
Let $f_0$ be the true density. We assume that there exists a constant $c$ such that for any $n$, there exists ${\mathcal F}_n^*\subset {\mathcal F}$ and a prior $\pi$ on ${\mathcal F}$ satisfying the following conditions:
\begin{itemize}
\item[- (A)]$$\Pp^\pi\left\{{\mathcal{ F}^*_n}^c\right\} =o( e^{-(c+2)n\epsilon_n^2}).$$
\item[- (B)] For any $j\in\Ne^*$, let $$S_{n,j} = \{ f \in \mathcal F_n^*:\quad j \epsilon_n < h(f_0,f) \leq
(j+1) \epsilon_n\},$$
and $H_{n,j}$ the
Hellinger  metric entropy  of  $S_{n,j}$. There exists $J_{0,n}$ (that may depend on $n$) such that for all $j \geq J_{0,n},$
 $$H_{n,j} \leq (K-1)nj^2\epsilon_n^2,$$
where $K$ is an absolute constant.
\item[- (C)] Let $$B_n(\epsilon_n) = \{f\in{\mathcal F}:\quad K(f_0,f) \leq
\epsilon_n^2, \ V(f_0,f) \leq \epsilon_n^2 \}.$$
Then,
$$\Pp^\pi\left\{ B_n(\epsilon_n) \right\} \geq e^{-cn
\epsilon_n^2}.$$
\end{itemize}
We have:
 $$\Pp^\pi\left\{ f:\quad h(f_0,f) \leq J_{0,n} \epsilon_n |X^n\right\}
 =1+o_P(1)$$
\end{thm}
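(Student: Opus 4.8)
\noindent\textbf{Proof proposal for Theorem~\ref{th:rate:app}.} This is the classical posterior contraction rate theorem of \cite{GGVdV} (their Theorem 2.4), the only change being that the entropy control (B) is imposed on the shells $S_{n,j}$ merely for $j\geq J_{0,n}$ and that the target radius in the conclusion is $J_{0,n}\epsilon_n$; I would reproduce the standard three-part argument. Write
$$\Pp^\pi\left\{ f:\ h(f_0,f)>J_{0,n}\epsilon_n\mid X^n\right\}=\frac{N_n}{D_n},\quad N_n=\int_{h(f_0,f)>J_{0,n}\epsilon_n}\prod_{i=1}^n\frac{f(X_i)}{f_0(X_i)}\,d\pi(f),\quad D_n=\int\prod_{i=1}^n\frac{f(X_i)}{f_0(X_i)}\,d\pi(f).$$
The plan is: (i) lower bound $D_n$ using (C); (ii) build a test sequence $\phi_n$ out of (B); (iii) upper bound $N_n$ using (A) and $\phi_n$. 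For (i) I would invoke the standard evidence lemma (Lemma 8.1 of \cite{GGVdV}) applied to $B_n(\epsilon_n)$: for every $C>0$, $\Pp_0\bigl(D_n\leq\pi(B_n(\epsilon_n))e^{-(1+C)n\epsilon_n^2}\bigr)\leq(C^2n\epsilon_n^2)^{-1}$. Since $n\epsilon_n^2\to\infty$ for all the rates $\epsilon_n$ at hand, taking $C=1$ and using (C) yields $D_n\geq e^{-(c+2)n\epsilon_n^2}$ on an event $\Omega_n$ with $\Pp_0(\Omega_n)\to1$; this is exactly why the exponent in (A) is $c+2$.

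For (ii), for each $j\geq J_{0,n}$ I would cover $S_{n,j}$ by at most $e^{H_{n,j}}\leq e^{(K-1)nj^2\epsilon_n^2}$ Hellinger balls of radius a small fixed fraction of $j\epsilon_n$ centred at points of $S_{n,j}$; for each centre, the Le Cam--Birg\'e testing bound (Section 7 of \cite{GGVdV}) supplies a test with both its type-I error against $f_0$ and its type-II error over the ball bounded by $e^{-K_0nj^2\epsilon_n^2}$, where $K_0>0$ is a universal constant. Let $\phi_n$ be the supremum of all these tests over $j\geq J_{0,n}$ and over the balls. If the absolute constant $K$ of (B) is fixed with $K-1<K_0$, a union bound and summation of the geometric-type series in $j$ give $\Pp_0\phi_n\leq 2e^{-(K_0-K+1)nJ_{0,n}^2\epsilon_n^2}\to0$, while for each $f\in\mathcal F_n^*$ with $j\epsilon_n<h(f_0,f)\leq(j+1)\epsilon_n$ (so $j\geq J_{0,n}$) we get $\Pp_f(1-\phi_n)\leq e^{-K_0nj^2\epsilon_n^2}$, hence $\int_{\mathcal F_n^*\cap\{h(f_0,f)>J_{0,n}\epsilon_n\}}\Pp_f(1-\phi_n)\,d\pi(f)\leq 2e^{-K_0nJ_{0,n}^2\epsilon_n^2}$. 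Since (B) is only strengthened by enlarging $J_{0,n}$, I may and do assume $J_{0,n}$ bounded below by a constant large enough that $K_0J_{0,n}^2>c+3$.

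For (iii), on $\Omega_n$ I would split
$$\frac{N_n}{D_n}\leq\phi_n+e^{(c+2)n\epsilon_n^2}\left(\int_{(\mathcal F_n^*)^c}\prod_{i=1}^n\frac{f(X_i)}{f_0(X_i)}\,d\pi(f)+\int_{\mathcal F_n^*\cap\{h(f_0,f)>J_{0,n}\epsilon_n\}}(1-\phi_n)\prod_{i=1}^n\frac{f(X_i)}{f_0(X_i)}\,d\pi(f)\right),$$
take expectations under $\Pp_0$ and apply Fubini: the first integral has $\Pp_0$-expectation $\pi((\mathcal F_n^*)^c)=o(e^{-(c+2)n\epsilon_n^2})$ by (A), the second has $\Pp_0$-expectation $\int\Pp_f(1-\phi_n)\,d\pi(f)\leq 2e^{-K_0nJ_{0,n}^2\epsilon_n^2}$ by step (ii). Thus $\E_0\bigl[(N_n/D_n)\1_{\Omega_n}\bigr]\leq\Pp_0\phi_n+o(1)+2e^{(c+2-K_0J_{0,n}^2)n\epsilon_n^2}=o(1)$; Markov's inequality then gives $(N_n/D_n)\1_{\Omega_n}\to0$ in $\Pp_0$-probability, and since $\Pp_0(\Omega_n)\to1$ we conclude $\Pp^\pi\{h(f_0,f)>J_{0,n}\epsilon_n\mid X^n\}=o_P(1)$, i.e. $\Pp^\pi\{f:\ h(f_0,f)\leq J_{0,n}\epsilon_n\mid X^n\}=1+o_P(1)$.

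The main obstacle is step (ii): converting the purely metric information (B) into a single test with exponentially small errors uniformly over each shell, and then carrying out the constant bookkeeping --- ensuring $K$ is small relative to the universal testing constant $K_0$, and that $J_{0,n}$ is large enough --- so that both the test term and the sieve-complement term survive multiplication by the factor $e^{(c+2)n\epsilon_n^2}$ from the denominator bound and still tend to $0$. The remaining ingredients (the evidence lower bound, Fubini on the numerator, Markov's inequality) are routine.
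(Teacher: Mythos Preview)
Your proposal is correct and follows precisely the route the paper intends: the authors explicitly state that the proof of Theorem~\ref{th:rate:app} ``is not given, but it is a slight modification of Theorem 2.4 of \cite{GGVdV}'', and what you have written is exactly that modification --- the standard three-step argument (evidence lower bound from (C), Le Cam--Birg\'e tests over Hellinger shells from (B), Fubini on the numerator split via the sieve (A) and the tests), with the only twist being that the tests are built and summed over $j\geq J_{0,n}$ rather than $j\geq 1$. One small caveat on your constant bookkeeping: you assume you may enlarge $J_{0,n}$ so that $K_0J_{0,n}^2>c+3$; strictly speaking this yields the conclusion for the enlarged threshold rather than the original $J_{0,n}$, but since in the paper's only application $J_{0,n}^2\geq j_0\log n/L(n)\to\infty$, this is harmless there (and is presumably what the authors have in mind).
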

To prove Theorem \ref{th:post:rate} it is thus enough to prove that  conditions (A), (B) and (C) of the previous result are satisfied.
We consider $(\Lambda_n)_n$ the increasing sequence of subsets of $\Ne^*$ defined by
$\Lambda_n=\{1,2,\dots, l_n\}$ with $l_n\in\Ne^*$. For any $n$, we set:
\begin{eqnarray*}
{\mathcal     F}_n^* =\left\{f_\th\in{\mathcal     F}_{l_n}:\quad     f_\theta    =
 \exp\left(\sum_{\la\in \Lambda_n}\th_\la \phi_\la -c(\th) \right), \ \norm{\th}_{\ell_2}\leq w_n\right\},
 \end{eqnarray*}
with
$$w_n=\exp(w_0n^\rho(\log n)^q), \quad \rho >0$$
Recall that
\begin{itemize}
\item[-] $\epsilon_n= \epsilon_0 n^{-\frac{\ga}{2\ga+1}}(\log n)^{\frac{\ga}{2\ga+1}} $ in case (PH)
\item[-] $\epsilon_n= \epsilon_0 n^{-\frac{\be}{2\be+1}} $ in case (D).
\end{itemize}
Define $l_n$ by
\begin{equation}\label{ln}
l_n=\frac{ l_0n\ep_n^2}{ L(n) },
\end{equation}
where $l_0$ is some positive constant.
When $\ga,\be>\frac{1}{2}$, we have
\begin{equation}\label{le}
l_n\ep_n^2\to 0.
\end{equation}
\paragraph{Proof of condition (A):} We have, since $\sum_k \tau_k < \infty$
 \begin{eqnarray*}
 \pi\left\{ {\mathcal F_n^*}^c \right\} &\leq& \sum_{k>l_n}p(k)+\Pp^\pi\left\{\sum_{k\leq  l_n}\theta_k^2>w_n^2\right\}\\
&\leq&
C\exp\left(-l_n L(l_n)\right)+\sum_{k\leq l_n}\Pp^\pi\left\{\frac{\theta_k^2}{ \tau_k } >  w_n^2\right\}\\
&\leq&C\exp\left(-l_0n\ep_n^2\right)+\sum_{k\leq l_n}\Pp^\pi\left\{\exp\left(\frac{|\theta_k|^p}{2\tau_k^{p/2}}\right)>\exp\left(\frac{w_n^p}{2}\right)\right\}\\
&\leq&C\exp\left(-l_0n\ep_n^2\right)+ Cl_n\exp\left(-\frac{w_n^p}{2}\right)\\
&\leq&C\exp\left(-l_0n\ep_n^2\right)+ C\exp\left(-n^H\right)
\end{eqnarray*}
 for any positive $H>0$.
 Hence, $$\pi\left\{{\mathcal F^*_n}^c\right\}\leq C\exp\left(-(l_0-1)n\ep_n^2\right)$$
and Condition (A) is proved.
\paragraph{Proof of condition (B):}
We  apply Lemma \ref{ineg} with $K_n=\Lambda_n$ and $k_n=l_n$. For  this purpose, we show  that the Hellinger  distance between two
functions  of ${\mathcal  F^*_n}$ is  related  to the  $\ell_2$-distance of  the
associated coefficients.  So, let us consider $f_{\theta}$ and $f_{\theta'}$ belonging to
${\mathcal F^*_n}$ with
$$f_{\theta}=\exp\left(\sum_{\la\in \Lambda_n}\thl\phi_\la-c(\th)\right),\quad
f_{\theta'}=\exp\left(\sum_{\la\in \Lambda_n}\thl'\phi_\la-c(\th')\right).$$
Let  us  assume  that  $\norm{\th'-\th}_{\ell_1}\leq  c_1\ep_nl_n^{-1/2} $  with  $c_1$  a
positive constant,  then using (\ref{normsup}) and (\ref{le}),
$$\left\|\sum_{\la\in \Lambda_n}(\thl'-\thl)\phi_\la\right\|_\infty\leq
C\sqrt{l_n}\norm{\th'-\th}_{\ell_2}\leq C\sqrt{l_n}\norm{\th'-\th}_{\ell_1}\leq
Cc_1\ep_n\to 0$$ and
\begin{eqnarray*}
\left|c(\te)-c(\te')\right|&=&\left|\log\left(\int_0^1
f_\th(x)\exp\left(\sum_{\la\in \Lambda_n}(\thl'-\thl)\phi_\la(x)\right)\right)\right|\\
&\leq&\left|\log\left(1+C\norm{\sum_{\la\in \Lambda_n}(\thl'-\thl)\phi_\la}_{\infty}\right)\right|\\
&\leq&C\norm{\sum_{\la\in \Lambda_n}(\thl'-\thl)\phi_\la}_{\infty}.
\end{eqnarray*}
Then,
\begin{eqnarray}\label{V1}
h^2(f_{\th},f_{\th'})&=&\int f_\theta(x)\left(\exp\left(\frac{1}{2}\sum_
{\la\in \Lambda_n}(\thl'-\thl)\phi_\la(x)+\frac{1}{2}\left(c(\th)-c(\th')\right)\right)-1\right)^2dx\nonumber\\
&\leq&\int_0^1  f_\th(x)\left(\exp\left(C\norm{\sum_{\la\in \Lambda_n}(\thl'-\thl)\phi_\la}_\infty\right)-1\right)^2dx\nonumber\\
&\leq&C\norm{\sum_
{\la\in \Lambda_n}(\thl-\thl')\phi_\la}_\infty^2\nonumber\\
&\leq& C l_n\norm{\th-\th'}_{\ell_1}^2\leq C l_n^2\norm{\th-\th'}_{\ell_2}^2
\end{eqnarray}
The next lemma establishes a converse inequality.
\begin{lemma}\label{h2}
There exists a constant $c\leq 1/2$ depending on $\ga$, $\beta$,$R$ and $\varPhi$ such that if
$$(j+1)^2\ep_n^2l_n\leq  c\times\min\left(c_0,(1-e^{-1})^2\right)$$
then for $f_\th\in S_{n,j}$,
$$\norm{\th_0-\th}_{\ell_2}^2\leq\frac{1}{c_0c}(\log n)^2h^2(f_0,f_\th).$$
\end{lemma}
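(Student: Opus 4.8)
The plan is to bound the Hellinger distance from below by the squared $\ell_2$-distance of the coefficients. Set $g=\log(f_\theta/f_0)$; since $f_\th=\exp\big(\sum_{\la\in\Lambda_n}\thl\phi_\la-c(\th)\big)$ and $f_0=\exp\big(\sum_{\la\in\Ne^*}\thol\phi_\la-c(\theta_0)\big)$, we may write $g=g_1+g_2$ with $g_1=-(c(\th)-c(\theta_0))\phi_0+\sum_{\la\in\Lambda_n}(\thl-\thol)\phi_\la$ supported on $\{0\}\cup\Lambda_n$, and $g_2=-\sum_{\la\notin\Lambda_n}\thol\phi_\la$. Using $f_0\geq c_0>0$ on $[0,1]$ (recall $\norm{\log f_0}_\infty<\infty$, and $c_0$ is the constant appearing in the statement) together with the elementary inequality $(e^{x/2}-1)^2\geq(1-e^{-1})^2\min(x^2/4,1)$ valid for all $x\in\R$, one gets, with $E:=\{x\in[0,1]:|g(x)|>2\}$,
\[ h^2(f_0,f_\theta)=\int f_0(e^{g/2}-1)^2 \;\geq\; c_0(1-e^{-1})^2\Big(|E|+\tfrac14\int_{E^c}g^2\Big). \]

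The first term is used to bound $|E|$, and this is exactly where the hypothesis enters. Since $l_n\geq 1$, the assumption forces $h^2(f_0,f_\theta)\leq(j+1)^2\ep_n^2\leq c\min(c_0,(1-e^{-1})^2)$, hence also $|E|\leq h^2(f_0,f_\theta)/(c_0(1-e^{-1})^2)\leq (j+1)^2\ep_n^2 l_n/(c_0(1-e^{-1})^2 l_n)\leq c/((1-e^{-1})^2 l_n)$, so $|E|\leq C_\varPhi\,c/l_n$ with $C_\varPhi$ not depending on $f_0$. Now apply Lemma \ref{ineg}: by \eqref{normsup} (handling the $\phi_0$ coefficient of $g_1$ separately) one has $\norm{g_1}_\infty^2\leq c\,l_n\norm{g_1}_2^2$ for a constant depending only on $\varPhi$, whence $\int_E g_1^2\leq \norm{g_1}_\infty^2|E|\leq C\,c\,\norm{g_1}_2^2$; choosing $c\leq 1/2$ small enough, depending only on $\varPhi$, so that this constant is $\leq 1/2$ gives $\int_{E^c}g_1^2\geq\tfrac12\norm{g_1}_2^2$, and therefore, from $g^2\geq\tfrac12 g_1^2-g_2^2$,
\[ \int_{E^c}g^2 \;\geq\; \tfrac14\norm{g_1}_2^2-\norm{g_2}_2^2. \]

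Plugging this back into the lower bound for $h^2(f_0,f_\theta)$ and using $\norm{g_1}_2^2=(c(\th)-c(\theta_0))^2+\sum_{\la\in\Lambda_n}(\thl-\thol)^2\geq\sum_{\la\in\Lambda_n}(\thl-\thol)^2$ and $\norm{g_2}_2^2=\sum_{\la\notin\Lambda_n}\thol^2$ yields $\sum_{\la\in\Lambda_n}(\thl-\thol)^2\leq \frac{C}{c_0}h^2(f_0,f_\theta)+C\sum_{\la\notin\Lambda_n}\thol^2$ for an absolute constant $C$, hence, adding the tail, $\norm{\theta_0-\theta}_{\ell_2}^2\leq \frac{C}{c_0}h^2(f_0,f_\theta)+C\sum_{\la\notin\Lambda_n}\thol^2$. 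It remains to absorb the tail: by \eqref{sob}, $\sum_{\la\notin\Lambda_n}\thol^2\leq c_{2,\ga}R^2 l_n^{-2\ga}$, and from the definitions of $\ep_n$ and of $l_n$ in \eqref{ln} one checks directly, in each of the cases (PH) and (D), that $l_n^{-2\ga}\leq C\ep_n^2$, which on $S_{n,j}$ is $\leq C h^2(f_0,f_\theta)$ since $h(f_0,f_\theta)>j\ep_n\geq\ep_n$. Combining, $\norm{\theta_0-\theta}_{\ell_2}^2\leq C' h^2(f_0,f_\theta)$ with $C'$ independent of $n$, which for $n$ large is bounded by $\frac{1}{c_0 c}(\log n)^2 h^2(f_0,f_\theta)$ (recall $c\leq 1/2$), the factor $(\log n)^2$ being ample slack.

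The main obstacle is the bookkeeping of constants rather than any single hard estimate: one must arrange $c$ to depend only on $\varPhi$ (and, through the tail, on $\ga,\be,R$), the whole dependence on $f_0$ being pushed into the constant $c_0$ that sits inside the hypothesis $(j+1)^2\ep_n^2l_n\leq c\min(c_0,(1-e^{-1})^2)$. The conceptual point is the observation that this hypothesis, via the lower Hellinger bound, forces the exceptional set $\{|g|>2\}$ to have measure $O(c/l_n)$, which combined with the Nikolskii-type inequality \eqref{normsup} prevents the finite-dimensional component $g_1$ from concentrating its $L_2$-mass there; everything else is a routine projection-and-tail decomposition using Lemma \ref{ineg}.
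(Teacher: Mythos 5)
Your proof is correct, but it follows a genuinely different route from the paper's. The paper applies Theorem 5 of Wong and Shen (1995), which gives $V(f_0,f_\theta)\leq 5\,h^2(f_0,f_\theta)\bigl(|\log M_1|-\log h(f_0,f_\theta)\bigr)^2$ with $M_1=(\int f_0^2/f_\theta)^{1/2}$; it then bounds $|\log M_1|$ via Lemma~\ref{ineg}, uses the trivial lower bound $V(f_0,f_\theta)\geq c_0\|\theta_0-\theta\|_{\ell_2}^2$ (coming from $f_0\geq c_0$ and $\int\phi_\lambda=0$ for $\lambda\geq 1$), and then solves for $\|\theta_0-\theta\|_{\ell_2}^2$ under the smallness hypothesis, which makes the coefficient of $l_n\|\theta_0-\theta\|_{\ell_2}^2$ on the right-hand side absorbable. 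The $(\log n)^2$ enters through the $-\log h(f_0,f_\theta)\leq -\log\epsilon_n$ term of Wong--Shen, so it is genuinely used. You instead bypass $V$ entirely and work directly with $h^2(f_0,f_\theta)=\int f_0(e^{g/2}-1)^2$, using the pointwise bound $(e^{x/2}-1)^2\geq(1-e^{-1})^2\min(x^2/4,1)$, a decomposition $g=g_1+g_2$ into the $\Lambda_n$-part and the tail, and the observation that the hypothesis forces the exceptional set $E=\{|g|>2\}$ to have measure $O(c/l_n)$, which via the Nikolskii-type inequality \eqref{normsup} prevents $g_1$ from hiding its $\L_2$-mass on $E$; the tail is then absorbed using \eqref{sob} together with $l_n^{-2\gamma}\lesssim\epsilon_n^2\leq h^2(f_0,f_\theta)$ on $S_{n,j}$, $j\geq 1$. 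Your argument is more elementary and self-contained (no appeal to Wong--Shen), and it delivers $\|\theta_0-\theta\|_{\ell_2}^2\leq C\,h^2(f_0,f_\theta)$ with no logarithmic loss, so the stated $(\log n)^2$ is indeed pure slack under your proof; the paper's version is shorter given Wong--Shen as a black box, but its $(\log n)^2$ is not decorative -- it propagates into $J_{0,n}$ in Theorem~\ref{th:rate:app} and hence into the posterior rate. Two small bookkeeping points to tighten: the letter $c$ is overloaded in "$\norm{g_1}_\infty^2\leq c\,l_n\norm{g_1}_2^2$" (that constant should be denoted, say, $C_\varPhi$ to avoid collision with the $c$ of the lemma), and the final constant $C'$ in $\|\theta_0-\theta\|_{\ell_2}^2\leq C' h^2$ depends, through the tail step $l_n^{-2\gamma}\leq C\epsilon_n^2$, on $\epsilon_0$ and $l_0$ in addition to $\gamma,\beta,R,\varPhi$; this is harmless (the paper's constant has the same hidden dependence) but worth stating, since it is the $(\log n)^2$ factor that then swallows it for $n$ large.
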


\begin{proof}
Using Theorem 5 of \cite{WS}, with $M_1=\left(\int_0^1\frac{f_0^2(x)}{f_\th(x)}dx\right)^{\frac{1}{2}}$, if
$$h^2(f_0,f_\th)\leq \frac{1}{2}(1-e^{-1})^2,$$ we have
\begin{eqnarray}\label{argumentSW}
V(f_0,f_\th)&\leq&5h^2(f_0,f_\th)\left(|\log M_1|-\log(h(f_0,f_\th)\right)^2.
\end{eqnarray}
But
\begin{eqnarray*}
M_1&=&\int_0^1f_0(x)\exp\left(\sum_
{\la\in \Lambda_n}(\thol-\thl)\phi_\la(x)+\sum_
{\la\notin \Lambda_n}\thol\phi_\la(x)-c(\th_0)+c(\th)\right)dx\\
&\leq&\int_0^1f_0(x)\exp\left(C[\sqrt{l_n}\norm{\th_0-\th}_{\ell_2}+R \ell_n^{\frac{1}{2}-\ga}]-c(\th_0)+c(\th)\right)dx,
\end{eqnarray*}
by using (\ref{normsup}) and (\ref{normsupsob}). Furthermore,
\begin{eqnarray}\label{contc}
\left|c(\th_0)-c(\th)\right|
&\leq&C[\sqrt{l_n}\norm{\th_0-\th}_{\ell_2}+R \ l_n^{\frac{1}{2}-\ga}].
\end{eqnarray}
So,
$$|\log M_1|\leq C[\sqrt{l_n}\norm{\th_0-\th}_{\ell_2}+R \ l_n^{\frac{1}{2}-\ga}].$$
Finally, since $f_{\theta} \in S_{n,j}$ for $j\geq 1$,
\begin{eqnarray*}
V(f_0,f_\th)&\leq&5h^2(f_0,f_\th)\left(C[\sqrt{l_n}\norm{\th_0-\th}_{\ell_2}+R \ l_n^{\frac{1}{2}-\ga}]-\log(\ep_n)\right)^2\\
&\leq&Ch^2(f_0,f_\th)\left(l_n\norm{\th_0-\th}_{\ell_2}^2+(\log n)^2\right).
\end{eqnarray*}
 Since $f_0(x)\geq c_0$ for any $x$ and $\int_0^1\phi_\la(x)dx=0$ for any $\la\in\Lambda$, we have
 \begin{eqnarray}\label{V2}
 V(f_0,f_\theta)   &\geq&c_0 \norm{\th_0-\th}_{\ell_2}^2.
 \end{eqnarray}
Combining (\ref{V1}) and (\ref{V2}), we conclude that
\begin{eqnarray*}
 \norm{\th_0-\th}_{\ell_2}^2 
&\leq&C(\log n)^2h^2(f_0,f_\th),
\end{eqnarray*}
if $h^2(f_0,f_\th)l_n\leq (j+1)^2\ep_n^2l_n\leq 1/(2C).$
Lemma \ref{h2} is proved by taking $c=(\max(C,1))^{-1}/2$.
\end{proof}
Now, under assumptions of Lemma \ref{h2}, using (\ref{V1}), we obtain
$$H_{n,j}\leq\log\left(\left(Cl_n(j+1)\log n\right)^{l_n}\right)\leq l_n\log\left(C\ep_n^{-1}\sqrt{l_n}\log n\right).$$
Then,  since $l_nL(n)=l_0 n\ep_n^2$, we have $$H_{n,j} \leq (K-1)nj^2\epsilon_n^2$$ as soon as
$$J_{0,n}^2\geq \frac{j_0 \log n}{L(n)},$$
where $j_0$ is a constant and condition (B) is satisfied for such $j$'s.  Now, let $j$ be such that
\begin{equation}\label{nonhypl2}
c(j+1)^2\ep_n^2l_n> \min\left(\frac{c_0}{2},\frac{1}{2}(1-e^{-1})^2\right).
\end{equation}
In this case, since for $f_\th\in {\mathcal F}_n^*$,
$$\norm{\th}_{\ell_1}\leq \sqrt{l_n}\norm{\th}_{\ell_2}\leq\sqrt{l_n}w_n,$$ for $n$ large enough,
$$H_{n,j}\leq\log\left(\left(Cl_nw_n\ep_n^{-1}\right)^{l_n}\right)\leq 2l_n\log(w_n)\leq 2w_0l_nn^\rho(\log n)^q.$$
Then, using (\ref{nonhypl2}), condition (B) is satisfied if $w_0$ and $q$ are small enough and if
$$l_n^2(\log n)^q\leq n^{1-\rho},$$
which is true for $n$ large enough, since $\ga,\be>\frac{1}{2}$, for $\rho$ small enough.

\paragraph{Proof of condition (C)} 
Let $k_n\in \N$, going to $\infty$ and $K_n=\{1,...,k_n\}$,
we assume that $\theta$ belongs to $A(u_n)$ where
\begin{equation}\label{Aun}
A(u_n)=\left\{\theta:\quad\thl=0 \mbox{ for every } \la\notin K_n\mbox{ and } \sum_{\la\in K_n}(\thol-\thl)^2\leq u_n^2\right\},
\end{equation}
where $u_n$ goes to 0 such that
\begin{equation}\label{ku1}
\sqrt{k_n}u_n\to 0.
\end{equation}
We  define for any $\la\in \La$,
$$\be_{\la}(f_0)=\int_0^1 \phi_{\la}(x)f_0(x)dx.$$
Let us introduce the following notations:
$$\foL=\exp\left(\sum_{\la\in K_n}\thol\phi_{\la}(x)-c(\thoL)\right),\quad \foLL=\exp\left(\sum_{\la\notin K_n}\thol\phi_{\la}(x)-c(\thoLL)\right).$$
We have
\begin{eqnarray*}
K(f_0,\foL) 
&=& \sum_{\la\notin K_n}\thol \be_\la(f_0)  + c(\thoL)- c(\th_0)\\
 &=&\sum_{\la\notin K_n}\thol \be_\la(f_0) + \log\left(\int_0^1 f_0(x) e^{-\sum_{\la\notin K_n} \thol\phi_\la(x)}dx\right) .
\end{eqnarray*}
Using inequality (\ref{normsupsob}) of Lemma \ref{ineg} and a Taylor expansion of the function $e^x$ we obtain
\begin{eqnarray*}
&&\int_0^1 f_0(x) e^{-\sum_{\la\notin K_n} \thol\phi_\la(x)}dx\\
&=&1-\sum_{\la\notin K_n} \thol\be_\la(f_0)+\frac{1}{2}\int_0^1 f_0(x)\left(\sum_{\la\notin K_n} \thol\phi_\la(x)\right)^2dx\times\left(1+o(1)\right).
\end{eqnarray*}
We have
\begin{eqnarray*}
\left|\sum_{\la\notin K_n} \thol\be_\la(f_0)\right|
&\leq&\|f_0\|_2\left(\sum_{\la\notin K_n}\thol^2\right)^{\frac{1}{2}}
\end{eqnarray*}
and
\begin{eqnarray*}
\int_0^1 f_0(x)\left(\sum_{\la\notin K_n} \thol\phi_\la(x)\right)^2dx&\leq&\|f_0\|_{\infty}\sum_{\la\notin K_n}\thol^2
\end{eqnarray*}
So,
\begin{eqnarray*}
\log\left(\int_0^1 f_0(x) e^{-\sum_{\la\notin K_n} \thol\phi_\la(x)}dx\right)
&=&-\sum_{\la\notin K_n}\thol\be_\la(f_0)-\frac{1}{2}\left(\sum_{\la\notin K_n}\thol \be_\la(f_0)\right)^2\\&&+\frac{1}{2}\int_0^1
f_0(x)\left(\sum_{\la\notin K_n}\thol\phi_\la(x)\right)^2dx+o\left(\sum_{\la\notin K_n}\thol^2\right).\\
\end{eqnarray*}
So, finally,
\begin{eqnarray*}
K(f_0,\foL) &=&
 \frac{1}{2}\int_0^1
f_0(x)\left(\sum_{\la\notin K_n}\thol\phi_\la(x)\right)^2dx-\frac{1}{2}\left(\sum_{\la\notin K_n}\thol\be_\la(f_0)\right)^2
 +o\left(\sum_{\la\notin K_n}\thol^2\right)
 \end{eqnarray*}
This implies that for $n$ large enough,
$$
K(f_0,\foL) \leq \|f_0\|_{\infty}\sum_{\la\notin K_n}\thol^2\leq Dk_n^{-2\gamma}.$$
Now, if $f_\theta \in  \mathcal F_{k_n}$ with
$f_\theta=\exp\left(\sum_{\la\in K_n}\thl\phi_\la-c(\th)\right),$
we have
\begin{eqnarray*}
K(f_0,f_{\theta})
&=&K(f_0,\foL)+ \sum_{ \la\in K_n}(\thol-\thl) \be_\la(f_0)- c(\theta_{0K_n}) + c(\theta) \\
&\leq&Dk_n^{-2\ga}+
\sum_{ \la\in K_n}(\thol-\thl) \be_\la(f_0)- c(\theta_{0K_n}) + c(\theta).
\end{eqnarray*}
We set for any $x$,
$$T(x)=\sum_{\la\in K_n}(\thl-\thol)\phi_\la(x)
.$$
Using (\ref{normsup}),
$$\left\|T\right\|_{\infty}\leq C\sqrt{k_n}u_n\to
0.$$
So,
$$\int_0^1f_{0K_n}(x)\exp(T(x))dx=1+\int_0^1f_{0K_n}(x)T(x)dx+\int_0^1f_{0K_n}(x)T^2(x)v(n,x)dx,$$
where $v$ is a bounded function. Since $\log(1+u)\leq u$ for any $u>-1$,
 for $\theta\in A(u_n)$ and $n$ large enough,
\begin{eqnarray*}
|-c(\theta_{0K_n})+c(\theta)|&=&
\left|\log\left( \int_0^1 f_{0K_n}(x)e^{T(x)}dx\right)\right| \\
&\leq& \int_0^1f_{0K_n}(x)T(x)dx+\int_0^1f_{0K_n}(x)T^2(x)v(n,x)dx\\
&\leq&\sum_{\la\in K_n}(\thl-\thol)\be_\la(f_{0K_n})+Dk_nu_n^2.
\end{eqnarray*}
So,
\begin{eqnarray*}
K(f_0,f_{\theta})
&\leq&Dk_n^{-2\ga}+
\sum_{ \la\in K_n}(\thol-\thl) \left(\be_\la(f_0)- \be_\la(f_{0K_n})\right)\\
&\leq&Dk_n^{-2\ga}+ u_n\|f_0- f_{0K_n}\|_2
\end{eqnarray*}
Using (\ref{normsupsob}), we have
\begin{eqnarray*}
\|f_0- f_{0K_n}\|_2^2 &\leq& \|f_0\|_\infty^2 \int_0^1\left(1- \exp\left(-\sum_{\la\notin K_n} \thol\phi_\la(x)-c(\thoL)+c(\th_0)\right)\right)^2dx.
\end{eqnarray*}
and
\begin{eqnarray*}
\left|c(\thoL)-c(\th_0)\right| 
&\leq&\norm{\sum_{\la\notin K_n} \thol\phi_\la}_{\infty}.
\end{eqnarray*}
Finally,
$$
\|f_0- f_{0K_n}\|_2 \leq D\norm{\sum_{\la\notin K_n} \thol\phi_\la}_{\infty}\leq Dk_n^{\frac{1}{2}-\ga}.
$$
and
\begin{equation}\label{K}
K(f_0,f_{\theta})
\leq Dk_n^{-2\ga}+ Du_n k_n^{\frac{1}{2}-\ga}.
\end{equation}
We now bound $V(f_0,f_\theta)$. For this purpose, we refine the control of $\left|c(\thoL)-c(\th_0)\right|$:
\begin{eqnarray*}
\left|c(\thoL)-c(\th_0)\right|&=&\left|\log\left(\int_0^1f_0(x)\exp\left(-\sum_{\la\notin K_n} \thol\phi_\la(x)\right)dx\right)\right|\\
&=&\left|\log\int_0^1 f_0(x)\left(1-\sum_{\la\notin K_n}\thol\phi_\la(x)+w(n,x)\left(\sum_{\la\notin K_n}\thol\phi_\la(x)\right)^2\right)dx\right|,
\end{eqnarray*}
where $w$ is a bounded function. So,
\begin{eqnarray*}
\left|c(\thoL)-c(\th_0)\right|
&\leq&D\left(\sum_{\la\notin K_n}\left|\thol\be_\la(f_0)\right|+\int_0^1\left(\sum_{\la\notin K_n}\thol\phi_\la(x)\right)^2dx\right)\\
&\leq&D\left(\sum_{\la\notin K_n}\thol^2\right)^{\frac{1}{2}}\leq Dk_n^{-\ga}.
\end{eqnarray*}
In addition,
\begin{eqnarray*}
\left|c(\theta_{0K_n})-c(\theta)\right|&\leq&\sum_{\la\in K_n}\left|\thl-\thol\right|\left|\be_\la(f_{0K_n})\right|+Dk_nu_n^2\\
&\leq&u_n\left(\norm{f_0-f_{0K_n}}_2+\norm{f_0}_2\right)+Dk_nu_n^2\\
&\leq&Du_n+Dk_nu_n^2
\end{eqnarray*}
Finally,
\begin{eqnarray}\label{V}
V(f_0,f_\theta)  &\leq&u_n^2+Dk_n^{-2\ga}+Dk_nu_n^2.
\end{eqnarray}
Now, let us consider the case (PH). We take $k_n$ and $u_n$ such that
\begin{equation}\label{ku}
k_n^{-2\ga}\leq k_0\ep_n^2\quad\mbox{ and } u_n=u_0\ep_nk_n^{-\frac{1}{2}},
\end{equation}
 where $k_0$ and $u_0$ are constants depending on $\norm{f_0}_\infty$, $\ga$, $R$ and $\varPhi$. If $k_0$ and $u_0$ are small enough, then, by using (\ref{K}) and (\ref{V}),
\[K(f_0,f_\th)\leq \ep_n^2\quad \mbox{and}\quad V(f_0,f_\theta)\leq \ep_n^2.\]So,
Condition (C) is satisfied if
$$\Pp^\pi\left\{A(u_n)\right\}\geq e^{-cn\ep_n^2},$$
where, $A(u_n)$ is defined in (\ref{Aun}). We have:
\begin{eqnarray*}
\Pp^\pi\left\{A(u_n)\right\}&\geq&\Pp^\pi\left\{\theta:\quad\sum_{\la\in K_n}(\thl-\thol)^2\leq u_n^2\right\}\times \exp\left(-c_1k_nL(k_n)\right\}
\end{eqnarray*}
The prior on $\theta $ implies that
\begin{eqnarray*}
P_1&=&\Pp^\pi\left\{\theta:\quad\sum_{\la\in K_n}(\thl-\thol)^2\leq u_n^2\right\}\\
&\geq&\Pp^\pi\left\{\theta:\quad\sum_{\la\in K_n}\left|\sqrt{\tau_0}\la^{-\beta}G_\la-\thol\right|\leq u_n\right\}\\
&=&\Pp^\pi\left\{\theta:\quad\sum_{\la\in K_n}\la^{-\beta}\left|G_\la-\tau_0^{-\frac{1}{2}}\la^{\beta}\thol\right|\leq \tau_0^{-\frac{1}{2}}u_n\right\}\\
&=&\int...\int 1_{\left\{\sum_{\la\in K_n}\la^{-\beta}\left|x_\la-\tau_0^{-\frac{1}{2}}\la^{\beta}\thol\right|\leq \tau_0^{-\frac{1}{2}}u_n\right\}}\prod_{\la\in K_n}g(x_\la)dx_\la\\
&\geq&\int...\int 1_{\left\{\sum_{\la\in K_n}\la^{-\beta}\left|y_\la\right|\leq \tau_0^{-\frac{1}{2}}u_n\right\}}\prod_{\la\in K_n}g\left(y_\la+\tau_0^{-\frac{1}{2}}\la^{\beta}\thol\right)dy_\la.
\end{eqnarray*}
Using (\ref{sob}), when $\ga\geq \be$, we have
$\sup_{\la\in K_n}\left|\tau_0^{-\frac{1}{2}}\la^{\beta}\thol\right|<\infty$ and since
\begin{equation}\label{ku2}
\sup_n\left\{\tau_0^{-\frac{1}{2}}k_n^\beta u_n\right\}<\infty
\end{equation}
using assumptions on the prior, there exists a constant $D_3$ such that
\begin{eqnarray}\label{c1}
P_1&\geq& D_3^{k_n}\int...\int 1_{\left\{\sum_{\la\in K_n}\la^{-\beta}\left|y_\la\right|\leq \tau_0^{-\frac{1}{2}}u_n\right\}}\prod_{\la\in K_n}dy_\la\nonumber\\
&\geq&  D_3^{k_n}\int...\int 1_{\left\{\sum_{\la\in K_n}\left|y_\la\right|\leq \tau_0^{-\frac{1}{2}}u_n\right\}}\prod_{\la\in K_n}dy_\la\nonumber\\
&\geq&\exp\left(-D_4k_n\log n\right),
\end{eqnarray}
where $D_4$ is a constant.
When $\gamma <\beta$, since there exists $a,b>0$ such that $\forall |y| \leq M $  for some positive $M$
$$g(y+u) \geq a \exp( -b |u|^{p^*} ) $$
using the above calculations we obtain if $p^* \leq 2$
\begin{eqnarray*}
P_1&\geq& D_3^{k_n}\exp\{-C \sum_{\lambda\in K_n} \lambda^{p^*\beta} |\theta_{0\lambda}|^{p^*} \} \sum \int...\int 1_{\left\{\sum_{\la\in K_n}\la^{-\beta}\left|y_\la\right|\leq \tau_0^{-\frac{1}{2}}u_n\right\}}\prod_{\la\in K_n}  dy_\la \\
&\geq& \exp\left[ -Ck_n^{1-p^*/2+ \beta- \gamma} \right]    \exp\left(-D_4k_n\log n\right) \\
 &\geq & \exp\left(-(D_4+1)k_n\log n\right) \quad \mbox{ if } \beta \leq 1/2 +p^*/2
\end{eqnarray*}
and if $t>2$
\begin{eqnarray*}
P_1&\geq& D_3^{k_n}\exp\{-C \sum_{\lambda\in K_n} \lambda^{p^*\beta} |\theta_{0\lambda}|^{p^*} \}\exp\left(-D_4k_n\log n\right)\\
 &\geq & \exp\left(-(D_4+1)k_n\log n\right) \quad \mbox{ if } \beta \leq 1/2 +1/p^*
\end{eqnarray*}

So, Condition (C) is established as soon as $D_4k_n\log n\leq cn\ep_n^2$. Using (\ref{ku}), this can be satisfied if and only if we take $k_n$ such that
\begin{equation}\label{ku3}
k_0^{-\frac{1}{2\ga}}\ep_n^{-\frac{1}{\ga}}\leq k_n\leq \frac{cn\ep_n^2}{D_4\log n},
\end{equation}
which is possible if and only if $\ep_0$ is large enough. In particular, this implies that
$$\sup_{n}\left\{\ep_n\left(\frac{\log n}{n}\right)^{-\frac{\ga}{2\ga+1}}\right\}<\infty.$$ Note that when $k_n$ satisfies (\ref{ku3}), Conditions (\ref{ku1}) and (\ref{ku2}) are satisfied as well.

Similar computations show the result for the case (D).

\subsection{Proof of Theorem  \ref{th:expo:BVM}} \label{sec:th:expo:BVM}
Our goal is to prove conditions (A1), (A2) and (A3) of Section \ref{subsec:BVM:line} to apply Theorem \ref{th:sieve:1}.
Let $\epsilon_n$ be the posterior concentration rate as obtained in Theorem \ref{th:post:rate}.

Let us consider $f=f_{\theta}\in{\mathcal F}_{k}$ for $1\leq k\leq l_n$, where $l_n = l_0 n\epsilon_n^2/L(n)$ in the case of type (PH) priors and $l_n = k_n^*$ in the case of type (D) priors.
First, using the same upper bound as in the proof of Lemma \ref{h2} we have
\begin{equation}\label{Vbis}
V(f_0,f)
 \leq 2C(\log n)^2 \epsilon_n^2,
\end{equation}
as soon as $h(f_0,f) \leq \epsilon_n$.
Thus, using (\ref{post:rate:1}), we have
$$\Pp^\pi\left\{ A_{u_n}^1|X^n\right\} = 1 + o_{\Pp_0}(1)$$
with $u_n=u_0 (\log n)^2 \epsilon_n^2 $,
for a constant $u_0$ large enough. Note that we can restrict ourselves to $A_{u_n}^1 \cap ( \cup_{k \leq l_n} \mathcal F_k )$, since
$\Pp^\pi\left[ ( \cup_{k \leq l_n} \mathcal F_k )^c \right] \leq e^{-cn\epsilon_n^2}$ for any $c>0$ by choosing  $l_0$ large enough, see the proof of Theorem \ref{th:post:rate}.

To establish (A2), we observe that
\begin{eqnarray*}
\norm{\log f_\theta-\log f_0}_{\infty}&\leq&\norm{\sum_{\la\in\Ne^*}(\thol-\thl)\phi_\la}_{\infty}+|c(\theta)-c(\theta_0)|\\
&\leq&C\left(\sqrt{l_n}\norm{\th-\th_0}_{\ell_2}+l_n^{\frac{1}{2}-\gamma}\right)=0(1),
\end{eqnarray*}
by using Lemma \ref{ineg} and (\ref{contc}). So, (A2) is implied by (A1). Now, let us establish (A3). Denote $A_n$ the set defined in assumption (A2) and restricted to $( \cup_{k \leq l_n} \mathcal F_k )$. For any $t$, we study the term
\begin{eqnarray*}
I_n&=&\frac{\int_{A_{n}}\exp\left(-\frac{F_0((h_f-t\bar{\psi}_{t,n})^2)}{2}+ G_n(h_f-t\bar{\psi}_{t,n}) +
R_n(h_f-t\bar{\psi}_{t,n})\right) d\pi(f) }{\int_{A_{n}}\exp\left(-
\frac{F_0(h_f^2)}{2}+ G_n(h_f) +  R_n(h_f)\right)d\pi(f) }\\
&=&\frac
{\sum_{1\leq k\leq l_n}p(k)\int_{A_{n}\cap {\mathcal F}_{k}}\exp\left(-\frac{F_0((h_f-t\bar{\psi}_{t,n})^2)}{2}+ G_n(h_f-t\bar{\psi}_{t,n}) +R_n(h_f-t\bar{\psi}_{t,n})\right) d\pi_k(f) }
{\sum_{1\leq k\leq l_n}p(k)\int_{A_n\cap {\mathcal F}_{k}}\exp\left(-\frac{F_0(h_f^2)}{2}+ G_n(h_f) +  R_n(h_f)\right)d\pi_k(f)}.
\end{eqnarray*}
If we set
$$b_{n,k,t} = \frac{t\Pi_{f_0,k}\psi_c - t\psi_{\Pi,c,0}}{\rn}=\frac{t}{\rn}\sum_{\la=1}^k\psi_{\Pi,c,\la}\phi_\la,$$
we have using (\ref{normsup}) and since $k \leq l_n$:
\begin{eqnarray*}
\norm{b_{n,k,t}}_{\infty} 
&\leq&\frac{t\sqrt{k}}{\sqrt{c_0} \rn}\norm{\Pi_{f_0,k}\psi_c - \psi_{\Pi,c,0}}_{f_0}\\
&\leq&\frac{2t\sqrt{l_n}}{\sqrt{c_0}\rn}\norm{\psi_c}_{\infty}=O(\ep_n).
\end{eqnarray*}
for $c_0$ a constant. Recall that for $f_\theta \in \mathcal F_k$, $$h_\theta = \rn\left(\sum_{\la\in\Ne^*}(\thl-\thol)\phi_\la - c(\theta)+c(\theta_0)\right)\quad  \mbox{and}
\quad B_{n,k}=\frac{\psi_{\Pi,c,[k]} }{\rn}$$
so, for $\theta'  = \theta - tB_{n,k}$, with $H_n = (h_\theta-t\psi_c)/\rn$ and $\Delta_\psi = \psi_c - \Pi_{f_0,k}\psi_c$
\begin{eqnarray*}
h_{\theta'}  &=& h_\theta - \rn b_{n,k,t} +\rn (c(\theta)-c(\theta - tB_{n,k}))\\
 &=&  h_\theta - t\bar{\psi}_{t,n} + t(\psi_c - \Pi_{f_0,k}\psi_c) - \rn \log \left[\frac{ F_0(e^{H_n+t\Delta_\psi/\rn })}{ F_0(e^{H_n}) }\right] \\
&=&   h_\theta - t\bar{\psi}_{t,n} + t\Delta_\psi - \Delta_n,
 \end{eqnarray*}
with
\begin{eqnarray*}
 \Delta_n&=&\rn \log \left[\frac{ F_0(e^{H_n+t\Delta_\psi/\rn })}{ F_0(e^{H_n}) }\right].
\end{eqnarray*}
Now, (\ref{contc}) implies $|\!|h_\theta|\!|_\infty  / \rn \leq \sqrt{k}\epsilon_n =
o(1)$ and since $F(\Delta_\psi^2)=O(1)$,
$\norm{\Delta_\psi}_\infty=O(\sqrt{l_n})=O(\sqrt{n}\ep_n)$,
\begin{eqnarray*}
F_0(e^{H_n+t\Delta_\psi/\rn })&=&F_0\left(e^{H_n}\left(1+\frac{t\Delta_\psi}{\rn}+\frac{t^2\Delta_\psi^2}{2n}\right)\right)+0\left(F(\Delta_\psi^2)\frac{\norm{\Delta_\psi}_\infty}{n^{3/2}}\right)\\
&=&F_0\left(e^{H_n}\left(1+\frac{t\Delta_\psi}{\rn}+\frac{t^2\Delta_\psi^2}{2n}\right)\right)+0\left(\frac{\ep_n}{n}\right)\\
&=&F_0\left(e^{H_n}\right)+\frac{t}{\rn}F_0(e^{H_n}\Delta_\psi)+\frac{t^2}{2n}F_0(e^{H_n}\Delta_\psi^2)+o\left(\frac{1}{n}\right),
\end{eqnarray*}
Also, for any function $v$ satisfying $F_0(|v|)<\infty$
\begin{eqnarray} \label{appr:Hn}
F_0(e^{H_n}v)&=&F_0\left(v e^{h_\theta/\rn}\right) -\frac{t}{\rn}F_0\left(ve^{h_\theta/\rn}\psi_c\right) +
 O\left(\frac{1}{n}\right).
\end{eqnarray}
Note that in the case $v=1$ since $F_0(e^{h_\theta/\rn})=1$ we can be more precise and obtain
\begin{eqnarray} \label{appr:Hn:1}
F_0(e^{H_n}) &=& 1 - \frac{t}{\rn}F_0\left(e^{h_\theta/\rn}\psi_c\right) + O(1/n) \nonumber \\
 &=& 1 - \frac{t F_0(h_\theta \psi_c )}{ n} +  O\left( \frac{ \epsilon_n^2}{\rn} +
 \frac{1}{n}\right)=1+o\left(\frac{1}{\rn}\right).
 \end{eqnarray}
 Moreover
 \begin{eqnarray} \label{appr:Hn:2}
F_0\left(v e^{h_\theta/\rn}\right) = F_0(v) + o(F_0(|v|)).
 \end{eqnarray}
Therefore using (\ref{appr:Hn})  with $v=\Delta_\psi^2$ leads to
\begin{eqnarray*}
\frac{F_0(e^{H_n+t\Delta_\psi/\rn })}{F_0(e^{H_n})}=1+\frac{t}{\rn}\frac{F_0(e^{H_n}\Delta_\psi)}{F_0(e^{H_n})}+\frac{t^2}{2n}F_0(\Delta_\psi^2)+o\left(\frac{1}{n}\right),
\end{eqnarray*}
and  using (\ref{appr:Hn}) with $v=\Delta_\psi$ together with
(\ref{appr:Hn:1}) and  using (\ref{appr:Hn:2})
\begin{eqnarray*}
 \frac{t}{\rn}F_0(e^{H_n}\Delta_\psi)
&=&\frac{t}{\rn}F_0\left(\Delta_\psi e^{h_\theta/\rn}
\right)-\frac{t^2}{n}F_0\left(\Delta_\psi \psi_c\right)
+o\left(\frac{1}{n}\right).
\end{eqnarray*}
Also
\begin{eqnarray*}
F_0\left(e^{h_\theta/\rn}\Delta_\psi\right) &=& \frac{ 1 }{
\rn}\left[ F_0\left(h_\theta\Delta_\psi\right)  +
\frac{ 1 }{ \rn }F_0\left(h_\theta^2B_{h_\theta,n}\Delta_\psi\right) \right],
\end{eqnarray*}
where $B_{h,n}$ is defined by (\ref{def:Bhn}). 
Since $F_0(e^{h_\theta/\sqrt{n}} \psi_c) = F_0(\psi_c )+o(1)=o(1)$,
we thus obtain using the fact that $F_0(e^{H_n})=1+o\left(\frac{1}{\rn}\right)$ and  $F_0(\psi_c\Delta_\psi)=F_0(\Delta_\psi^2)$
\begin{eqnarray*}
\frac{F_0(e^{H_n+t\Delta_\psi/\rn })}{F_0(e^{H_n})}&=&
1+\frac{t}{\rn}F_0\left(e^{h_\theta/\rn}\Delta_\psi\right)-
\frac{t^2}{2n}F_0\left(\Delta_\psi^2\right)+o\left(\frac{1}{n}\right)
\end{eqnarray*}
and finally,
\begin{eqnarray} \label{Deltan}
\Delta_n&=&\rn\log\left[\frac{F_0(e^{H_n+t\Delta_\psi/\rn })}{F_0(e^{H_n})}\right] \nonumber \\
&=&tF_0\left(e^{h_\theta/\rn}\Delta_\psi\right)-\frac{t^2}{2\sqrt{n}}F_0\left(\Delta_\psi^2\right)
 +o\left(\frac{1}{\sqrt{n}}\right) \nonumber \\
&=& \frac{ t}{\sqrt{n} } \left[ F_0\left(h_\theta \Delta_\psi\right)
+ \frac{F_0\left( h_\theta^2 B_{h_\theta,n} \Delta_\psi \right) }{
\sqrt{n} } - \frac{t}{2}F_0(\Delta_\psi^2) \right] + o(n^{-1/2}).
 \end{eqnarray}
Moreover
 \begin{eqnarray*}
F_0\left( h_\theta^2 B_{h_\theta,n} \Delta_\psi \right) =
\frac{1}{2} F_0\left( h_\theta^2 \Delta_\psi \right) + o\left(
F_0\left( h_\theta^2 |\Delta_\psi| \right)\right)
 \end{eqnarray*}
and by using (\ref{Vbis}),
\begin{eqnarray*}
\frac{ F_0\left( h_\theta^2 |\Delta_\psi| \right) }{ \rn } &\leq&\|\Delta_{\psi}\|_{\infty}\frac{ F_0\left( h_\theta^2  \right) }{ \rn }\\
&\leq&C\|\Delta_{\psi}\|_{\infty}\rn\left(\log n\right)^2\ep_n^2.
\end{eqnarray*}
To bound  $\norm{\Delta_{\psi}}_{\infty}$, we  write
$$\Delta_\psi = \psi_{+k} - \Pi_{f_0,k}(\psi_{+k}),$$
where  $\psi_{+k}$ is a linear function of the $\phi_j$'s for $j\geq k+1$. Then by using (\ref{normsup}),
\begin{eqnarray*}
\norm{\Delta_{\psi}}_{\infty} &\leq& \norm{\psi_{+k}}_\infty +  \norm{\Pi_{f_0,k}\psi_{+k}}_\infty \\
 &\leq&  \norm{\psi_{+k}}_\infty +  C\sqrt{k} \norm{\Pi_{f_0,k}\psi_{+k}}_{f_0} \\
 &\leq& \norm{\psi_{+k}}_\infty +  C\sqrt{k} \norm{\psi_{+k}}_{f_0}\\
 &\leq& \norm{\psi_{+k}}_\infty +  \frac{C\sqrt{\norm{f_0}_\infty} \sqrt{k}}{\sqrt{c_0}}  \norm{\psi_{+k}}_{2}.
\end{eqnarray*}
Under the assumption that
$$\sup_{k \leq l_n}(\norm{\psi_{+k}}_\infty +  \sqrt{k}  |\!|\psi_{+k}|\!|_2) = o\left( \frac{1}{
\rn \epsilon_n^2\left(\log n\right)^{2}} \right),$$
 we obtain that
 $$\Delta_n = \frac{ t}{\sqrt{n} } \left[ F_0\left(h_\theta \Delta_\psi\right)
 - \frac{t}{2}F_0(\Delta_\psi^2) \right] + o(n^{-1/2}).$$
 Note that $\Delta_n = o(1)$. Finally,
\begin{eqnarray*}
R_n(h_{\theta'}) &=& \rn F_0(h_{\theta'}) +\frac{ F_0(h_{\theta'}^2)}{2} \\
 &=& R_n(h_\theta - t\bar{\psi}_{t,n}) - \sqrt{n} \Delta_n - \frac{t^2}{2}F_0( \Delta_\psi^2)
 + tF_0(h_\theta \Delta_\psi)  - \Delta_n F_0(h_\theta) +o(1) \\
 &=&
 R_n(h_\theta - t\bar{\psi}_{t,n})  -\Delta_n F_0(h_\theta) + o(1)
 \end{eqnarray*}
Recall that $h_{\theta'}=h_{\theta}-t\bar{\psi}_{t,n}+t\Delta_\psi-\Delta_n$, $\Delta_n=o(1)$ and $F_0(\Delta_\psi)=0$. Note also that
$$
\bar{\psi}_{t,n}(x) = \psi_c(x) + \frac{\rn}{t} \log\left(F_0(e^{H_n})\right) = \psi_c(x) + o(1)$$
 so that $F_0(\Delta_\psi \bar{\psi}_{t,n}) = F_0(\Delta_\psi^2)+o(1)$ and
\begin{eqnarray*}
-\frac{ F_0(h_{\theta'}^2)}{2}&=& -\frac{ F_0((h_{\theta}-t\bar{\psi}_{t,n})^2)}{2}-\frac{ F_0((t\Delta_\psi-\Delta_n)^2)}{2}-F_0((h_{\theta}-t\bar{\psi}_{t,n})(t\Delta_\psi-\Delta_n))\\
&=&-\frac{ F_0((h_{\theta}-t\bar{\psi}_{t,n})^2)}{2}+\frac{t^2F_0(\Delta_\psi^2)}{2} -tF_0(h_{\theta}\Delta_\psi)+\Delta_nF_0(h_{\theta})+o(1)
\end{eqnarray*}
Furthermore,
$$\G_n(h_{\theta'})=\G_n(h_{\theta}-t\bar{\psi}_{t,n})+t\G_n(\Delta_\psi).$$
We set 
$$\mu_{n,k} =  -F_0(h_\theta \Delta_\psi)  +\G_n(\Delta_\psi)$$ and we 
finally obtain,
\begin{eqnarray*}
\lefteqn{
-\frac{F_0((h_{\theta'})^2)}{2} + \G_n(h_{\theta'}) + R_n(h_{\theta'}) } \\
&&= -\frac{ F_0((h_{\theta}-t\bar{\psi}_{t,n})^2)}{2}+ R_n(h_\theta - t\bar{\psi}_{t,n})+\G_n(h_{\theta}-t\bar{\psi}_{t,n})+t\mu_{n,k} \\
&&\hspace{1cm}+ \frac{t^2F_0(\Delta_\psi^2)}{2}+o(1).
\end{eqnarray*}
  Note that by orthogonality $F_0(h_\theta \Delta_\psi)  =\rn F_0[({\psi_c}-\Pi_{f_0,k}{\psi_c} )\sum_{j\geq k+1}\theta_{0j}\phi_j]$ so that $\mu_{n,k} $ does not depend on $\theta$ and
  setting $T_k\theta = \theta - tB_{n,k}$ for all $\theta$,
 we can write
\begin{eqnarray*}
J_k &:=&\frac{\int_{A_{n}\cap {\mathcal F}_{k}}\exp\left(-\frac{F_0((h_f-t\bar{\psi}_{t,n})^2)}{2}+ G_n(h_f-t\bar{\psi}_{t,n}) +R_n(h_f-t\bar{\psi}_{t,n})\right) d\pi_k(f)}{\int_{A_n\cap {\mathcal F}_{k}}\exp\left(-\frac{F_0(h_f^2)}{2}+ G_n(h_f) +  R_n(h_f)\right)d\pi_k(f)}\\
&=& e^{-\frac{t^2F_0(\Delta_\psi^2)}{2}}
 e^{-t\mu_{n,k}}
  \frac{\int_{\Theta_k\cap A_{n}'} e^{- \frac{F_0\left(h_{T_k\theta}^2\right)}{2}+ \G_n\left(h_{T_k\theta}\right) +  R_n\left(h_{T_k\theta}\right) } d\pi_k(\theta)}
{\int_{\Theta_k\cap A_{n}'} e^{- \frac{F_0(h_\theta^2)}{2}+ \G_n(h_\theta) +  R_n(h_\theta) } d\pi_k(\theta) }(1+o_n(1)),
\end{eqnarray*}
where $A_{n}'=\{\theta:\ f_\theta\in A_n\}.$
Moreover, for $k \leq l_n$, $|\!|B_{n,k}|\!|_2 \leq C/\rn,$
where $C$ depends on $c_0$ and $\norm{\psi_{c}}_\infty$. So, if we set
$$T_k(A_n')   = \{\theta \in \Theta_k\cap A_n': \ \theta+tB_{n,k}\in A_n'\}$$
 for all $\theta \in T_k(A_n')$,
$$|\!|\theta -\theta_0 |\!|_{\ell_2}^2 \leq 2(\log n)^4 \epsilon_n^2 +\frac{2c^2}{n} \leq 2
\epsilon_n^2(\log n)^4(1+o_n(1)) $$ since $n\epsilon_n^2 \rightarrow +\infty$. For all
$\theta \in \Theta_k\cap A_n'$ such that $|\!|\theta -\theta_0 |\!|_2\leq \frac{(\log n)^2 \epsilon_n}{2}$
$$\theta +t B_{n,k} \in A_n' \cap \Theta_k$$ for $n$ large enough and we can write 
$$ A_{n,1}' = \left\{\theta\in A_n':\ |\!|\theta -\theta_0 |\!|_{\ell_2}\leq \frac{(\log n)^2 \epsilon_n}{2} \right\} , \quad A_{n,2}' = \left\{\theta\in A_n':\  |\!|\theta -\theta_0 |\!|_2\leq 3(\log n)^2 \epsilon_n \right\}$$
then
 \begin{eqnarray}\label{Tk:inclusion}
 \Theta_k\cap A_{n,1}' \subset T_k(A_n')
  \subset\Theta_k\cap A_{n,2} '
 \end{eqnarray}
and under assumption (\ref{cond:prior}),
\begin{eqnarray*}
J_k &\leq&  e^{-t^2\frac{F_0(\Delta_\psi^2)}{2}}
 e^{-t \mu_{n,k}}
 \frac{\int_{\Theta_k\cap A_{n,2}'} e^{- \frac{F_0(h_\theta^2)}{2}+ \G_n(h_\theta) +  R_n(h_\theta) } d\pi_k(\theta)}
{\int_{\Theta_k\cap A_n' } e^{- \frac{F_0(h_\theta^2)}{2}+ \G_n(h_\theta) +  R_n(h_\theta) } d\pi_k(\theta) }(1+ o_n(1)), \\
 J_k &\geq & e^{-t^2\frac{F_0(\Delta_\psi^2)}{2}}
 e^{-t\mu_{n,k}}
  \frac{\int_{\Theta_k\cap A_{n,1}' } e^{- \frac{F_0(h_\theta^2)}{2}+ \G_n(h_\theta) +  R_n(h_\theta) } d\pi_k(\theta)}
{\int_{\Theta_k\cap A_n' } e^{- \frac{F_0(h_\theta^2)}{2}+ \G_n(h_\theta) +  R_n(h_\theta) } d\pi_k(\theta) }(1+ o_n(1)).
\end{eqnarray*}
Therefore,
\begin{eqnarray*}
\zeta_n(t) &:=& \esp[\exp(t\rn(\psi(f) - \psi(\Pb_n))) \1_{A_n}(f) | X^n ] \\
 &=&e^{\frac{t^2F_0(\psi_c^2)}{2}}
\left[ \sum_{k=1}^{l_n} p(k|X^n) J_k \right](1+ o_n(1)) \\
 &\leq&
\left[ \sum_{k=1}^{l_n} p(k|X^n)
\1_{\Theta_k \cap A_n' \neq \emptyset}e^{-t \mu_{n,k}}e^{t^2\frac{F_0({\psi_c}^2)- F_0(\Delta_\psi^2) }{2}}
\right](1+ o_n(1))
\end{eqnarray*}
and
\begin{eqnarray*}
 \zeta_n(t) &\geq& e^{t^2\frac{F_0({\psi_c}^2)}{2}}
 \sum_{k=1}^{l_n} p(k|X^n) e^{-t \mu_{n,k}}e^{-t^2\frac{F_0(\Delta_\psi^2) }{2}}\pi\left[  A_{n,1}'|X^n,k\right].
 \end{eqnarray*}
Besides under the above conditions on the prior, with probability converging to 1,
$$\pi\left[  (A_{n,1}')^c|X^n\right] \leq  e^{-nc\epsilon_n^2},$$
for some positive constant $c>0$. Then uniformly over $k $ such that $\Theta_k \cap A_{n,1}' \neq \emptyset$
$$\pi\left[  (A_{n,1}')^c|X^n,k\right]e^{-t\mu_{n,k} } = o(1)$$
and
\begin{eqnarray*}
\zeta_n(t) &\geq&
e^{t^2\frac{F_0({\psi_c}^2)}{2}}\sum_{k=1}^{l_n} p(k|X^n)
\1_{\Theta_k \cap A_n \neq \emptyset}e^{-t \mu_{n,k}} e^{-t^2\frac{F_0(\Delta_\psi^2) }{2}}(1+
o_n(1)).
 \end{eqnarray*}
This proves that the posterior distribution of $\rn(\Psi(f) -
\Psi(P_n))$ is asymptotically equal to a mixture of Gaussian
distributions with variances $V_{0k} =F_0({\psi_c}^2) - F_0(\Delta_\psi^2)$, means $-\mu_{n,k}$ and weights $p(k|X^n)$.

Now if $|\!|\Delta_\psi|\!| = o(1)$ ($k \rightarrow +\infty$) $\G_n(\Delta_{\psi}) = o_P(1)$ and with probability converging to 1,
 \begin{eqnarray*}
 |\mu_{n,k}| &\leq& |\!| f_0|\!|_{\infty}\rn \left(\sum_{j=k+1}^{+\infty} \psi_{c,j}^2 \right)^{1/2}
 \left( \sum_{j=k+1}^{+\infty} \theta_{0j}^2 \right)^{1/2}+o_n(1).
\end{eqnarray*}
 Thus if $k=k_n^*$,
\begin{eqnarray*}
 |\mu_{n,k}| &=& o\left(\rn (k_n^*)^{-\gamma-1/2}\right)+o_n(1)=o_n(1)
\end{eqnarray*}
 and Equality
(\ref{BVM}) is proved.
\\\\
\noindent{\bf Ackowledgment}:  This work has been partially supported by the ANR-SP Bayes grant

\end{document}